\documentclass[10pt]{article}

\usepackage{natbib}                     %
\usepackage{amsmath, amssymb, amsthm}   %
\usepackage{graphicx}                   %
\usepackage{subcaption}                 %
\usepackage{fullpage}                   %
\usepackage{hyperref}                   %
\usepackage{enumitem}                   %
\usepackage{bm}                         %
\usepackage{authblk}                    %

\usepackage{algorithm}
\usepackage{algorithmic}
\usepackage{booktabs}
\usepackage{xcolor}                     %
\makeatletter
\newcommand{\red}[1]{%
  \ifmmode
    \textcolor{red}{\mathpalette\mathRed{#1}}%
  \else
    \textcolor{red}{#1}%
  \fi
}
\newcommand{\mathRed}[2]{#2}
\makeatother

\definecolor{darkblue}{rgb}{0.0, 0.0, 0.5}  %

\hypersetup{
    colorlinks=true,
    linkcolor=darkblue,
    anchorcolor=darkblue,
    citecolor=darkblue,
    filecolor=darkblue,
    menucolor=darkblue,
    urlcolor=darkblue
}

\usepackage[nameinlink,noabbrev]{cleveref}
\numberwithin{equation}{section}

\usepackage{caption}
\newtheorem{theorem}{Theorem}
\newtheorem{lemma}{Lemma}
\newtheorem{proposition}{Proposition}
\newtheorem{corollary}{Corollary}
\theoremstyle{definition}
\newtheorem{definition}{Definition}

\newtheorem{remark}{Remark}
\newtheorem{assumption}{Assumption}

\newcommand{\rbb}{\mathbb{R}}  %
\newcommand{\ebb}{\mathbb{E}}  %
\DeclareMathOperator*{\argmin}{arg min}

\title{Generalized Stochastic Gradient Descent with Momentum Methods for Smooth Optimization}
\author[1]{Zimeng Wang}
\author[2]{Alp Yurtsever}
\affil[1,2]{Department of Mathematics and Mathematical Statistics, Umeå University, Sweden}
\affil[1]{\texttt{zimeng.wang@umu.se}}
\affil[2]{\texttt{alp.yurtsever@umu.se}}
\date{}

\begin{document}

\maketitle

\begin{abstract}
Stochastic gradient descent with momentum (SGDM) methods have become fundamental optimization tools in machine learning, combining the computational efficiency of stochastic gradients with the acceleration benefits of momentum. Despite their widespread use in practice, the theoretical understanding of SGDM remains incomplete, with most existing analyses focusing on specific momentum schemes or requiring restrictive assumptions.
In this paper, we introduce a generalized SGDM framework that unifies a broad class of momentum-based methods, including SGD with Polyak's momentum, SGD with Nesterov's momentum, and many others. We provide comprehensive convergence analyses for both convex and nonconvex optimization problems under mild smoothness and bounded variance assumptions. For convex problems, we establish general ergodic convergence results with constant parameters and derive improved iterate convergence rates with time-varying parameters. 
For nonconvex problems, we prove sublinear convergence to stationary points and establish linear convergence to a neighborhood of the optimum under the Polyak--\L{}ojasiewicz condition. Notably, our analysis allows flexible parameter choices and thus provides convergence guarantees for many existing momentum methods as special cases.
\end{abstract}

\section{Introduction}
We consider the following smooth optimization problem:
\begin{equation}\label{eq:main-prob}
    \min_{x\in\rbb^d} f(x):=\ebb_{\xi\in \mathcal{D}}[\ell(x;\xi)].
\end{equation}
Here, $\xi\in \mathcal{Z}$ denotes a data sample drawn from a distribution $\mathcal{D}$ over the sample space $\mathcal{Z}$, and $\ell(x; \xi): \rbb^d \times \mathcal{Z} \rightarrow \rbb$ is a loss function that is differentiable with respect to its first argument. This formulation is standard in machine learning applications, where $x\in \rbb^d$ represents the model parameters and $\ell(\cdot; \xi)$ measures the loss incurred by the model on data sample $\xi$. In particular, when $\mathcal{D}$ is the uniform distribution over a finite dataset $\{\xi_i\}_{i=1}^n$, the objective function $f$ in \eqref{eq:main-prob} reduces to the empirical risk $\frac{1}{n}\sum_{i=1}^n\ell(\cdot;\xi_i)$.

Gradient-based methods are fundamental tools for solving problem \eqref{eq:main-prob} in machine learning and scientific computing. The classical gradient descent (GD) method iteratively updates the iterate via $x_{k+1} = x_k - \alpha \nabla f(x_k)$, where $\alpha > 0$ is the step size. For $L$-smooth convex functions, GD with step size $\alpha = 1/L$ achieves a convergence rate of $O(1/t)$ after $t$ iterations \citep{nesterov2013introductory}. However, this rate is known to be suboptimal, as the lower bound for first-order methods in smooth convex optimization is $O(1/t^2)$ \citep{nemirovsky1983problem}.

To achieve further acceleration, momentum methods have been introduced in the optimization literature. A pioneering example is the heavy-ball (HB) method \citep{polyak1964some}, which incorporates a momentum term to accumulate information from past gradients. Although originally proposed for quadratic objectives, the HB method has since been shown to converge for general smooth convex functions \citep{ghadimi2015global}.
Subsequently, \citet{nesterov1983method} proposed a different momentum scheme by evaluating the gradient at a look-ahead point, leading to Nesterov's accelerated gradient (NAG) method. This method achieves the optimal convergence rate of $O(1/t^2)$ for smooth convex functions. These momentum methods have since been extensively studied and extended to a wide range of settings, including composite optimization \citep{beck2009fast, nesterov2013gradient}, nonconvex problems \citep{ochs2014ipiano}, and coordinate descent methods \citep{nesterov2012efficiency}.

These deterministic methods are effective when the full gradient $\nabla f$ is readily accessible. However, in many modern machine learning applications, computing the full gradient becomes prohibitively expensive as the training dataset grows large. Stochastic gradient descent (SGD) \citep{robbins1951stochastic} addresses this challenge by replacing the full gradient with a stochastic gradient estimator based on a single sample or a small mini-batch, thereby substantially reducing the per-iteration computational cost. Owing to its efficiency and scalability, SGD has become the workhorse optimization algorithm in deep learning \citep{bottou2018optimization,lecun2015deep,nguyen2019new,zhang2004solving}.

The success of momentum methods in deterministic optimization naturally motivated their extension to stochastic settings. Stochastic gradient descent with momentum (SGDM) aims to combine the computational efficiency of SGD with the acceleration benefits of momentum. In addition, the momentum mechanism helps traverse flat regions of the loss landscape and can potentially escape poor local minima \citep{ochs2015ipiasco}. Consequently, SGDM and its variants have become fundamental tools in deep learning optimization \citep{kingma2015adam,qian1999momentum,sutskever2013importance,yan2018unified} and are widely used for training deep neural networks \citep{he2016deep,simonyan2014very}.

Despite its widespread use in practice, the theoretical understanding of SGDM remains less complete than that of its deterministic counterparts. Many existing convergence results for SGDM focus on specific momentum schemes.
In particular, for SGD with Polyak’s (heavy-ball) momentum, \citet{loizou2020momentum} established global linear convergence rates for convex quadratic objectives under appropriate parameter choices. For general smooth convex objectives, \citet{sebbouh2021almost} proved almost sure convergence of the last iterate of SGD with Polyak’s momentum under time-varying step sizes and momentum parameters. Under constant parameters, \citet{liu2020improved} showed that SGD with Polyak’s momentum achieves convergence guarantees comparable to those of vanilla SGD for both nonconvex and strongly convex problems.
For stochastic composite optimization problems involving both smooth and nonsmooth components, \citet{lan2012optimal} introduced an accelerated stochastic gradient method based on Nesterov’s momentum and proved its optimality for convex objectives. This approach was later extended to nonconvex problems in \citet{ghadimi2016accelerated}.

Beyond these algorithm-specific analyses, several works have sought to develop unified frameworks that generalize classical momentum methods. For example, \citet{yan2018unified,yang2016unified} provided a unified analysis of the stochastic unified momentum (SUM) method, which encompasses both SGD with Polyak’s momentum and SGD with Nesterov’s momentum for nonconvex smooth objectives, under a bounded gradient assumption. These results were further extended to distributed settings with linear speedup guarantees in \citet{yu2019linear}. 
Additionally, \citet{ma2018quasi} proposed the quasi-hyperbolic momentum (QHM) method, which interpolates between SGD and SGD with Polyak’s momentum. When the initialization is sufficiently close to an optimal solution, \citet{gitman2019understanding} showed that QHM converges linearly for smooth and strongly convex functions. In the context of over-parameterized learning, \citet{liu2018accelerating} developed a generalized momentum method, namely the momentum-added stochastic solver (MASS), by introducing an additional compensation term to SGD with Nesterov’s momentum, and proved linear convergence for quadratic loss functions.

While the aforementioned frameworks generalize classical momentum methods, most of their convergence guarantees are established for convex or strongly convex objectives and are often restricted to special settings such as quadratic losses or interpolation regimes~\citep{liu2018accelerating}. For nonconvex problems, these works typically rely on restrictive assumptions, such as bounded gradients \citep{yan2018unified,yang2016unified}, or analyze Polyak’s momentum and Nesterov’s momentum separately to establish convergence \citep{yu2019linear}. Moreover, their analyses typically focus on constant parameter choices, and the resulting convergence rates generally match those of vanilla SGD under comparable assumptions. As a result, these results do not capture the accelerated behavior of Nesterov’s method observed in deterministic settings.
Therefore, despite substantial progress, a unified theoretical framework that systematically explains how momentum interacts with stochastic gradient methods across both convex and nonconvex regimes remains lacking.

In this work, we aim to fill this gap by providing a comprehensive convergence analysis for a generalized SGDM method that encompasses a broad class of momentum schemes for both convex and nonconvex optimization problems. Our main contributions are summarized as follows:
\begin{itemize}
    \item[1).] We introduce a generalized SGDM framework characterized by momentum parameters $\{\beta_k\}_{k\ge 1}$ and step size parameters $\{\gamma_k,\eta_k\}_{k\ge 1}$. We show that this formulation encompasses many existing momentum-based methods as special cases, including SGD with Polyak’s momentum, SGD with Nesterov’s momentum, SUM, QHM, and MASS.
    
    \item[2).] For convex problems, we establish general ergodic convergence results for the generalized SGDM with constant parameters, covering a wide range of step sizes $\gamma,\eta$ and any momentum parameter $\beta\in[0,1)$. Furthermore, through the introduction of novel Lyapunov functions built on intermediate iterates, we derive improved iterate convergence results for the generalized SGDM with time-varying parameters. With carefully chosen parameters, we recover the optimal $O(1/t^2)$ convergence rate in deterministic settings and an $O(1/t^2+\sigma/\sqrt{t})$ rate for unbiased stochastic gradients with variance bounded by $\sigma^2$.

    \item[3).] For nonconvex problems, we prove sublinear convergence to stationary points for the generalized SGDM with constant parameters. Notably, the result holds for any $\beta\in[0,1)$ and step sizes $\gamma,\eta$ whose sum is properly upper bounded. Furthermore, under an additional Polyak--\L{}ojasiewicz condition, we introduce a new sequence of Lyapunov functions to establish an improved convergence bound in terms of the objective gap, showing linear convergence to a neighborhood of the optimum.

    \item[4).] For both convex and nonconvex objectives, our analysis allows for flexible parameter choices and relies only on mild smoothness and bounded variance assumptions, without requiring bounded gradients or other restrictive conditions. To the best of our knowledge, this is the first work to provide rigorous convergence guarantees for such a general class of momentum schemes in both convex and nonconvex settings under comparable assumptions. As a result, new convergence guarantees for many existing momentum methods follow directly as special cases of our analysis.
\end{itemize}

The remainder of this paper is organized as follows. Section~\ref{sec:preliminary} introduces preliminary notations and definitions. Section~\ref{sec:algorithm} presents the generalized SGDM method and shows how it unifies several classical momentum methods. Sections~\ref{sec:convergence-cvx} and~\ref{sec:convergence-nc} provide convergence analyses for convex and nonconvex problems, respectively. Section~\ref{sec:experiments} reports numerical experiments, and Section~\ref{sec:conclusion} concludes the paper.

\section{Preliminary}
\label{sec:preliminary}
In this section, we summarize some notations and definitions that are used throughout the paper.
For a vector $v:=(v_1,\dots, v_d)^\top\in \rbb^d$, let $\|v\|$ represents its Euclidean norm, i.e., $\|v\|:=\big(\sum_{i=1}^d v_i^2\big)^{1/2}$. For $n\in\mathbb{N}_+$, we denote $[n]:=\{1,\ldots,n\}$. 

Let ($\Omega, \mathcal{F}, \mathrm{P})$ be the underlying probability space. We denote by $\left\{x_k\right\}_{k \geq 0}\subset\rbb^d$ the sequence of iterates generated by a stochastic algorithm, driven by a sequence of independent noise variables $\left\{\xi_k\right\}_{k \geq 1}$. Let $\left\{\mathcal{F}_k\right\}_{k \geq 0}$ be the natural filtration of this process, i.e., $\mathcal{F}_k= \sigma\left(x_0, \xi_1, \xi_2, \ldots, \xi_k\right)$ represents the $\sigma$-algebra generated by $x_0$ and $\left\{\xi_i\right\}_{i=1}^k$. We denote the conditional expectation given $\mathcal{F}_{k-1}$ by $\ebb_k[\cdot]:=\ebb\left[\cdot \mid \mathcal{F}_{k-1}\right]$, which represents the expectation given all information available at step $k$.
We recall the definitions of convexity, Lipschitz continuity, and smoothness as follows.
\begin{definition}[Convexity, Lipschitz continuity, and Smoothness]
Let the function $f: \rbb^d \rightarrow \rbb$ be continuously differentiable. Let $\mu\geq0, G>0$, and $L>0$.
\begin{enumerate}
  \item[1).] We say that $f$ is $\mu$-strongly convex if
  \[
  f(y) \ge f(x) + \langle\nabla f(x), y-x\rangle+\frac{\mu}{2}\|y-x\|^2,~\forall x, y\in\rbb^d.
  \]
  When the above inequality holds with $\mu=0$, we say that $f$ is convex.
  \item[2).] We say that $f$ is $G$-Lipschitz continuous if
  $
  |f(y)-f(x)|\leq G\|y-x\|,~\forall x, y\in\rbb^d.
  $
  \item[3).] We say that $f$ is $L$-smooth if its gradient is $L$-Lipschitz continuous, i.e.,
  $
  \|\nabla f(y)-\nabla f(x)\|\leq L\|y-x\|,~\forall x, y\in\rbb^d.
  $
\end{enumerate}
\end{definition}
A useful property of an $L$-smooth function $f$ is that
\begin{equation*}
    f(y) \leq f(x) + \langle\nabla f(x), y-x\rangle + \frac{L}{2} \|y-x\|^2,~\forall x,y \in \mathbb{R}^d.
\end{equation*}
Moreover, if $f$ is convex, then it additionally satisfies
\begin{equation}\label{eq: cvx-L}
	f(y) \ge f(x) + \langle\nabla f(x), y-x\rangle +\frac{1}{2L}\|\nabla f(y)-\nabla f(x)\|^2,~\forall x,y \in \mathbb{R}^d.
\end{equation}
Both inequalities are standard results in convex optimization; see \citet{nesterov2003introductory} for detailed proofs.
For a convex function $f:\rbb^d \rightarrow \rbb$ that attains a global minimizer, we denote by $x^*\in\argmin_{x\in\rbb^d} f(x)$ a minimizer of $f$ and $f^*:=f(x^*)$.

\section{Generalized SGDM Method}
\label{sec:algorithm}
We now introduce our generalized stochastic gradient descent with momentum (SGDM) framework. Let $x_1\in\rbb^d$ be an initial point and set $m_0=0$. We denote by $g_k:=\nabla \ell(x_k;\xi_{i_k})$ the stochastic gradient at iteration $k$, where $i_k$ is drawn uniformly from the index set $[n]$. The generalized SGDM updates the iterates according to
\begin{equation}\label{alg:g-sgdm}
\tag{G-SGDM}
    \begin{aligned}
        &m_k=\beta_k m_{k-1}+(1-\beta_k)g_k, \\
        &x_{k+1}=x_k-\gamma_k g_k-\eta_k m_k,
    \end{aligned}
\end{equation}
where $\beta_k\in[0,1)$ is the momentum parameter, while $\gamma_k\ge0$ and $\eta_k\in\rbb$ serve as step size parameters. Note that we allow $\eta_k$ to take negative values in this general formulation since its negativity can be compensated by a sufficiently large positive $\gamma_k$. In our analysis, we shall introduce suitably defined ``effective" step sizes that depend on the parameters $\{\beta_k,\gamma_k,\eta_k\}_{k\ge 1}$, which are required to be strictly positive.

The formulation \eqref{alg:g-sgdm} is highly general and encompasses several important special cases. When $\beta_k\equiv 0$, \eqref{alg:g-sgdm} reduces to the vanilla SGD:
\begin{equation}\label{alg:sgd}
\tag{SGD}
    x_{k+1}=x_k-(\gamma_k+\eta_k) g_k.
\end{equation}
Moreover, it includes SGD with Polyak’s (heavy-ball) momentum, SGD with Nesterov's momentum, the stochastic unified momentum (SUM) \citep{yan2018unified}, the quasi-hyperbolic momentum (QHM) method \citep{ma2018quasi}, and the momentum-added stochastic solver (MASS) \citep{liu2018accelerating} as special cases, as shown in the following proposition. Note that we assume $\eta_k\neq 0$ without loss of generality; otherwise, \eqref{alg:g-sgdm} reduces to the SGD update.
\begin{proposition}\label{prop:hb&nes&sum}
   The iterative scheme \eqref{alg:g-sgdm} encompasses the following momentum-based methods by specifying the parameters $\{\beta_k,\gamma_k,\eta_k\}_{k\ge 1}$ accordingly.
   \begin{enumerate}
    \item[(\romannumeral1).] If $\gamma_k\equiv0$, then \eqref{alg:g-sgdm} recovers the SGD with Polyak's (heavy-ball) momentum method
       \begin{equation}\label{alg:sgd-hb}
        \tag{SGD-HB}
        x_{k+1}=x_k + \frac{\beta_k \eta_k}{\eta_{k-1}} (x_k - x_{k-1}) - (1-\beta_k)\eta_k g_k.
        \end{equation}
    \item[(\romannumeral2).] If $\gamma_k=(1-\beta_k)\eta_{k-1}/\beta_k$, then \eqref{alg:g-sgdm} yields the SGD with Nesterov's momentum method:
        \begin{equation}\label{alg:sgd-nesterov}
        \tag{SGD-N}
            \begin{aligned}
                &y_{k+1} =x_k-\gamma_k g_k, \\
                &x_{k+1} =y_{k+1}+\frac{\beta_k\eta_k}{\eta_{k-1}}(y_{k+1}-y_k).
            \end{aligned}
        \end{equation}
    \item[(\romannumeral3).] If $\beta_k\equiv\beta\in[0,1)$, $\gamma_k\equiv s\alpha$, and $\eta_k\equiv \alpha/(1-\beta)-s\alpha$, where $s\in[0,1/(1-\beta)]$ and $\alpha>0$, then \eqref{alg:g-sgdm} reduces to the SUM method
        \begin{equation}\label{alg:sum}
        \tag{SUM}
            \begin{aligned}
                &y_{k+1} =x_k-\alpha g_k, \\
                &z_{k+1} = x_k - s \alpha g_k, \\
                &x_{k+1} =y_{k+1}+\beta(z_{k+1}-z_k).
            \end{aligned}
        \end{equation}
    \item[(\romannumeral4).] If $\beta_k\equiv\beta\in[0,1)$, $\gamma_k\equiv \alpha(1-\nu)$, and $\eta_k\equiv\alpha\nu$, where $\alpha>0$ and $\nu\in[0,1]$, then \eqref{alg:g-sgdm} leads to the QHM method
        \begin{equation}\label{alg:qhm}\tag{QHM}
        \begin{aligned}
        & m_k = \beta m_{k-1}+(1-\beta) g_k, \\
        & x_{k+1} = x_k-\alpha[(1-\nu) g_k+\nu m_k].
        \end{aligned}
        \end{equation}
    \item[(\romannumeral5).] If $\beta_k\equiv\beta\in[0,1)$, $\gamma_k\equiv \alpha$, and $\eta_k\equiv (\beta\alpha-\lambda)/(1-\beta)$, where $\alpha, \lambda>0$, then \eqref{alg:g-sgdm} gives the MASS method
        \begin{equation}\label{alg:mass}\tag{MASS}
            \begin{aligned}
            & y_{k+1} = x_k-\alpha g_k, \\
            & x_{k+1} = (1+\beta) y_{k+1}-\beta y_k+\lambda g_k.
            \end{aligned}
            \end{equation}
       \end{enumerate}
\end{proposition}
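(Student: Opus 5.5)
The plan is to eliminate the momentum variable $m_k$ from \eqref{alg:g-sgdm} and obtain a single second-order recursion in $x_k$. Each listed method can then be identified by matching coefficients. From the second line of \eqref{alg:g-sgdm}, for every $k\ge 1$ we have $\eta_k m_k = x_k - x_{k+1} - \gamma_k g_k$, hence $m_{k-1} = (x_{k-1}-x_k-\gamma_{k-1}g_{k-1})/\eta_{k-1}$ for $k\ge 2$. This uses the standing assumption $\eta_k\neq 0$. For $k=1$ we use $m_0=0$, with the convention $x_0=x_1$ and $\gamma_0 g_0=0$, so the boundary term vanishes. Substituting into $m_k=\beta_k m_{k-1}+(1-\beta_k)g_k$ gives the master identity
\begin{equation*}
x_{k+1} = x_k + \frac{\beta_k\eta_k}{\eta_{k-1}}(x_k-x_{k-1}) - \bigl(\gamma_k+(1-\beta_k)\eta_k\bigr) g_k + \frac{\beta_k\eta_k\gamma_{k-1}}{\eta_{k-1}} g_{k-1}.
\end{equation*}
All five items will be read off from this single identity.

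For (i), setting $\gamma_k\equiv 0$ kills both the $\gamma_k$ and $\gamma_{k-1}$ terms, which yields \eqref{alg:sgd-hb} directly. For (ii), define $y_{k+1}=x_k-\gamma_k g_k$. The claimed second line of \eqref{alg:sgd-nesterov} then expands to
\begin{equation*}
x_{k+1}=x_k-\gamma_k g_k+\frac{\beta_k\eta_k}{\eta_{k-1}}(x_k-x_{k-1}) - \frac{\beta_k\eta_k}{\eta_{k-1}}(\gamma_k g_k-\gamma_{k-1}g_{k-1}).
\end{equation*}
Comparing with the master identity, the $g_{k-1}$ terms already agree. The $g_k$ coefficients agree exactly when $\gamma_k\beta_k\eta_k/\eta_{k-1}=(1-\beta_k)\eta_k$, i.e.\ when $\gamma_k=(1-\beta_k)\eta_{k-1}/\beta_k$, which is precisely the stated choice (assuming $\beta_k>0$).

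For (iii)--(v) the parameters are constant, so $\beta_k\eta_k/\eta_{k-1}=\beta$. In (iii), expanding \eqref{alg:sum} gives
\begin{equation*}
x_{k+1}=x_k-\alpha g_k+\beta(x_k-x_{k-1})-\beta s\alpha g_k+\beta s\alpha g_{k-1}.
\end{equation*}
With $\gamma=s\alpha$ and $(1-\beta)\eta=\alpha-(1-\beta)s\alpha$, the master $g_k$ coefficient becomes $s\alpha+\alpha-s\alpha+\beta s\alpha=\alpha+\beta s\alpha$, and the $g_{k-1}$ coefficient is $\beta s\alpha$; both match. Item (iv) needs no elimination at all: substituting $\gamma=\alpha(1-\nu)$ and $\eta=\alpha\nu$ into \eqref{alg:g-sgdm} gives \eqref{alg:qhm} verbatim. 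For (v), writing $y_{k+1}=x_k-\alpha g_k$, \eqref{alg:mass} expands to
\begin{equation*}
x_{k+1}=x_k+\beta(x_k-x_{k-1})-(1+\beta)\alpha g_k+\beta\alpha g_{k-1}+\lambda g_k.
\end{equation*}
The master identity with $\gamma=\alpha$ and $(1-\beta)\eta=\beta\alpha-\lambda$ gives the $g_k$ coefficient $\alpha+\beta\alpha-\lambda$ and the $g_{k-1}$ coefficient $\beta\alpha$, so it matches.

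The main obstacle is bookkeeping at the boundary $k=1$ (through $m_0=0$ and the initialization of $y_1$ and $z_1$) together with the division by $\eta_{k-1}$ and by $\beta_k$. I would handle the boundary by adopting the conventions $x_0=y_1=z_1=x_1$, under which every scheme's first step reduces to $x_2=x_1-(\gamma_1+(1-\beta_1)\eta_1)g_1$. I would then check that this agrees with \eqref{alg:g-sgdm}, and I would remark that the case $\beta_k=0$ in (ii) degenerates to SGD.
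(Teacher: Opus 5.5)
Your proposal is correct and follows essentially the paper's route: you eliminate $m_k$ via $\eta_k m_k = x_k - x_{k+1} - \gamma_k g_k$ to obtain the same second-order recursion the paper uses for (iii), and then you match coefficients, whereas the paper reads off (i), (ii) and (v) from an equivalent rearrangement written in terms of $y_{k+1}=x_k-\gamma_k g_k$. Your explicit handling of the $k=1$ boundary, via $m_0=0$ and the conventions $x_0=y_1=z_1=x_1$, is extra care that the paper's proof leaves implicit.
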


\begin{proof}[Proof of Proposition \ref{prop:hb&nes&sum}]
    Firstly, it is straightforward to verify (\romannumeral4) by plugging the corresponding parameters in \eqref{alg:g-sgdm}. To establish the remaining properties, we multiply the first equation in \eqref{alg:g-sgdm} by $\eta_k$ to obtain
    \begin{equation}\label{eq:sgd-n-1}
        \eta_k m_k=\frac{\beta_k \eta_k}{\eta_{k-1}} \eta_{k-1} m_{k-1}+(1-\beta_k)\eta_k g_k.
    \end{equation}
    From the second equation in \eqref{alg:g-sgdm}, we have
    \begin{align*}
        &\eta_k m_k = - x_{k+1} + x_k - \gamma_k g_k,\\
        & \eta_{k-1} m_{k-1} = - x_k + x_{k-1} - \gamma_{k-1} g_{k-1},
    \end{align*}
    applying which on \eqref{eq:sgd-n-1} gives
    \begin{equation}\label{eq:sgdm-0}
        - x_{k+1} + x_k - \gamma_k g_k = \frac{\beta_k \eta_k}{\eta_{k-1}}(- x_k + x_{k-1} - \gamma_{k-1} g_{k-1}) + (1-\beta_k)\eta_k g_k.  
    \end{equation}
    Rearranging the terms in \eqref{eq:sgdm-0} and denoting $y_{k+1}=x_k - \gamma_k g_k$, we obtain
    \begin{equation}\label{eq:sgdm}
        \begin{aligned}
           x_{k+1} &= x_k - \gamma_k g_k - \frac{\beta_k \eta_k}{\eta_{k-1}}(x_{k-1} - \gamma_{k-1} g_{k-1}) + \frac{\beta_k \eta_k}{\eta_{k-1}}x_k - (1-\beta_k)\eta_k g_k\\
           & = y_{k+1} - \frac{\beta_k \eta_k}{\eta_{k-1}}y_k + \frac{\beta_k \eta_k}{\eta_{k-1}} y_{k+1} + \Big(\frac{\beta_k \eta_k}{\eta_{k-1}}\gamma_k-(1-\beta_k)\eta_k\Big) g_k\\
           & = y_{k+1} + \frac{\beta_k \eta_k}{\eta_{k-1}} (y_{k+1} - y_k) + \Big(\frac{\beta_k \eta_k}{\eta_{k-1}}\gamma_k-(1-\beta_k)\eta_k\Big) g_k.
        \end{aligned}
    \end{equation}
    
    We now establish properties (\romannumeral1), (\romannumeral2), and (\romannumeral5) using the formulation \eqref{eq:sgdm}. For property (\romannumeral1), if $\gamma_k\equiv 0$, then by definition $y_{k+1}=x_k$, and thus \eqref{eq:sgdm} reduces to \eqref{alg:sgd-hb}. 
    For property (\romannumeral2), if $\gamma_k=(1-\beta_k)\eta_{k-1}/\beta_k$, then the last term in \eqref{eq:sgdm} vanishes, and we obtain
    \[
    x_{k+1} = y_{k+1} + \frac{\beta_k \eta_k}{\eta_{k-1}} (y_{k+1} - y_k),
    \]
    which is precisely \eqref{alg:sgd-nesterov}. For property (\romannumeral5), substituting $\beta_k\equiv\beta\in[0,1)$, $\gamma_k\equiv \alpha$, and $\eta_k\equiv (\beta\alpha-\lambda)/(1-\beta)$ into \eqref{eq:sgdm} yields
    \[
    x_{k+1} =y_{k+1} + \beta(y_{k+1} - y_k) + \lambda g_k=(1+\beta)y_{k+1} - \beta y_k + \lambda g_k,
    \]
    which matches \eqref{alg:mass}.

    Finally, we establish property (\romannumeral3). By substituting the definitions of $y_{k+1}$, $z_k$, and $z_{k+1}$ into the update rule for $x_{k+1}$ in \eqref{alg:sum}, we can rewrite \eqref{alg:sum} via a single line:
    \begin{equation}\label{eq:sum-1}
    \begin{aligned}
        x_{k+1} = & x_k - \alpha g_k + \beta(x_k - s \alpha g_k - x_{k-1} + s \alpha g_{k-1})\\
        = & x_k + \beta (x_k - x_{k-1}) - (1+s\beta)\alpha g_k+ s\beta \alpha g_{k-1}.
    \end{aligned} 
    \end{equation}
    On the other hand, rearranging \eqref{eq:sgdm-0} in a similar way as \eqref{eq:sum-1}, we obtain
    \[
     x_{k+1} = x_k + \frac{\beta_k \eta_k}{\eta_{k-1}}(x_k - x_{k-1}) - (\gamma_k+(1-\beta_k)\eta_k) g_k + \frac{\beta_k \eta_k}{\eta_{k-1}}\gamma_{k-1} g_{k-1},
    \]
    which coincides with \eqref{eq:sum-1} when $\beta_k\equiv\beta$, $\gamma_k\equiv s\alpha$, and $\eta_k\equiv\alpha/(1-\beta)-s\alpha$. The proof is now complete.
\end{proof}

Having established that \eqref{alg:g-sgdm} unifies these existing methods, we now specify the assumptions under which we will analyze its convergence properties throughout the paper.
\begin{assumption}\label{ass:s-bv}
    The objective function $f$ is $L$-smooth. Moreover, the gradient approximations $\{g_k\}$ are unbiased and have uniformly bounded variance $\sigma^2\ge 0$, i.e.,
    \[
    \ebb_k[g_k]=\nabla f(x_k),\quad \ebb_k[\|g_k-\nabla f(x_k)\|^2]\leq \sigma^2,~\forall k\ge 1.
    \]
\end{assumption}
Observe that from Assumption \ref{ass:s-bv} we have
\begin{equation}\label{eq:g-bound}
    \ebb[\|g_k\|^2] = \ebb[\| \nabla f(x_k) \|^2]+\ebb[\|g_k-\nabla f(x_k)\|^2]\leq \ebb[\| \nabla f(x_k) \|^2]+\sigma^2.
\end{equation}

\section{Convergence for Convex Problems}
\label{sec:convergence-cvx}
In this section, we establish convergence guarantees for \eqref{alg:g-sgdm} when applied to convex optimization problems. We first analyze the case of constant parameters and then derive accelerated convergence rates by employing carefully designed time-varying parameters. Throughout this section, we assume that the objective function $f$ in \eqref{eq:main-prob} admits a global minimizer $x^*\in\arg\min_{x\in\rbb^d} f(x)$.

\subsection{Convergence Results with Constant Parameters}
To establish convergence of \eqref{alg:g-sgdm} with constant parameters $\beta_k\equiv\beta\in[0,1), \gamma_k\equiv\gamma\ge0$, and $\eta_k\equiv\eta\in\rbb$, we first introduce an auxiliary sequence $\{w_k\}_{k\ge 1}$ defined by
\begin{equation}\label{def:wk}
w_k= \begin{cases}x_1, \quad \text{if } k=1 &  \\ \frac{1}{1-\beta} (x_k-\beta x_{k-1}+\beta \gamma g_{k-1}), & \text{if } k \ge 2.\end{cases}
\end{equation}

The following lemma shows that this auxiliary sequence $\{w_k\}_{k\ge 1}$ evolves according to a simple SGD update rule, which facilitates the convergence analysis significantly.
\begin{lemma}\label{lem:w-sgd}
Let the sequence $\{w_k\}_{k\ge 1}$ be defined in \eqref{def:wk}, where $\{x_k\}_{k\ge 1}$ are the iterates generated by \eqref{alg:g-sgdm} with constant parameters $\beta\in [0,1)$, $\gamma\ge 0$, and $\eta\in\rbb$. Then for any $k\ge 1$, we have
\[
w_{k+1}=w_k-(\gamma+\eta) g_k.
\]
\end{lemma}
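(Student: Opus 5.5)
I will verify the identity by direct substitution, treating the case $k=1$ separately from $k\ge 2$. The key tool is the single-line recursion for \eqref{alg:g-sgdm} derived in the proof of Proposition~\ref{prop:hb&nes&sum}, specialized to constant parameters. Rearranging \eqref{eq:sgdm-0} with $\beta_k\equiv\beta$, $\gamma_k\equiv\gamma$, $\eta_k\equiv\eta$ gives, for $k\ge 2$,
\begin{equation*}
x_{k+1} = x_k + \beta(x_k-x_{k-1}) - (\gamma+(1-\beta)\eta)g_k + \beta\gamma g_{k-1}.
\end{equation*}
This only requires eliminating $m_k$ and $m_{k-1}$ via $\eta m_k = x_k - x_{k+1}-\gamma g_k$. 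For $\eta=0$ the identity $x_{k+1}=x_k-\gamma g_k$ holds trivially and still satisfies this recursion, so no division by $\eta$ is needed if I derive it directly.

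For $k\ge 2$, I multiply the target identity by $(1-\beta)$ and use the definition \eqref{def:wk}. Then $(1-\beta)(w_{k+1}-w_k)$ equals
\[
x_{k+1}-x_k-\beta(x_k-x_{k-1})+\beta\gamma g_k-\beta\gamma g_{k-1}.
\]
Substituting the recursion above for $x_{k+1}-x_k-\beta(x_k-x_{k-1})$, the $g_{k-1}$ terms cancel. What remains is $-(\gamma+(1-\beta)\eta)g_k+\beta\gamma g_k=-(1-\beta)(\gamma+\eta)g_k$. Dividing by $1-\beta>0$ gives the claim.

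For $k=1$, since $m_0=0$ we have $m_1=(1-\beta)g_1$, so $x_2=x_1-(\gamma+(1-\beta)\eta)g_1$. Then
\[
w_2=\tfrac{1}{1-\beta}\bigl(x_2-\beta x_1+\beta\gamma g_1\bigr)=\tfrac{1}{1-\beta}\bigl((1-\beta)x_1-(1-\beta)(\gamma+\eta)g_1\bigr)=x_1-(\gamma+\eta)g_1=w_1-(\gamma+\eta)g_1.
\]

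The only real care point is the base case. The formula for $w_k$ at $k\ge2$ refers to $x_{k-1}$ and $g_{k-1}$, so the $k=1$ step has to be checked by hand using $m_0=0$ rather than through the general recursion. This is also why the definition sets $w_1=x_1$. Everything else is routine algebra.
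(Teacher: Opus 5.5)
Your proposal is correct and follows essentially the same route as the paper: the base case $k=1$ is checked by hand using $m_0=0$, and for $k\ge 2$ the identity follows by direct substitution into the definition of $w_k$. The only difference is cosmetic: you first eliminate the momentum through the two-step recursion for $x_{k+1}$, which, as you note, can be derived without dividing by $\eta$. The paper instead substitutes $x_{k+1}-x_k=-\gamma g_k-\eta m_k$ directly and then uses $m_k-\beta m_{k-1}=(1-\beta)g_k$.
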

\begin{proof}[Proof of Lemma \ref{lem:w-sgd}]
Let us first verify the case $k=1$. Note that $m_1=(1-\beta)g_1$ since $m_0=0$, hence we know that $x_2 - x_1 = - \gamma g_1-(1-\beta)\eta g_1$ from the second equation in \eqref{alg:g-sgdm}, which further implies that
\begin{align*}
w_2-w_1 & = \frac{1}{1-\beta}(x_2-\beta x_1+\beta \gamma g_1)-x_1 = \frac{1}{1-\beta}(x_2- x_1+\beta \gamma g_1) \\
& =\frac{1}{1-\beta}(-\gamma g_1-(1-\beta)\eta g_1+\beta \gamma g_1)=-(\gamma+\eta) g_1.
\end{align*}
For $k\ge 2$, it follows from \eqref{def:wk}, together with the update rules for \eqref{alg:g-sgdm} that
\begin{align*}
w_{k+1}-w_k & = \frac{1}{1-\beta}[x_{k+1}-\beta x_k+\beta \gamma g_k-(x_k-\beta x_{k-1}+\beta \gamma g_{k-1})] \\
& = \frac{1}{1-\beta}[x_{k+1}-x_k-\beta(x_k-x_{k-1})+\beta \gamma g_k-\beta \gamma g_{k-1}] \\
& = \frac{1}{1-\beta}(-\gamma g_k-\eta m_k+\beta \gamma g_{k-1}+\beta \eta m_{k-1}+\beta \gamma g_k-\beta \gamma g_{k-1}) \\
& = -\gamma g_k-\frac{\eta}{1-\beta}(m_k-\beta m_{k-1}) \\
& = - \gamma g_k-\eta g_k =- (\gamma+\eta) g_k.
\end{align*}
The proof is now complete.
\end{proof}

Lemma \ref{lem:w-sgd} reveals that the auxiliary sequence $\{w_k\}_{k\ge 1}$ is generated via SGD-like updates with constant step size $\gamma+\eta$, which serves as the ``effective" step size in the sense that it should be positive to ensure the convergence of \eqref{alg:g-sgdm}.
Now we are ready to present the main convergence result for \eqref{alg:g-sgdm} with constant parameters. 
\begin{theorem}\label{thm:cvx-const}
Let Assumption \ref{ass:s-bv} hold, and the function $f$ in \eqref{eq:main-prob} be convex. Let $\{x_k\}_{k\ge 1}$ be the sequence generated by \eqref{alg:g-sgdm} with constant parameters $\beta\in [0,1)$, $\gamma\ge 0$, and $\eta\in\rbb$. Denote $\bar{x}_t:=\frac{1}{t}\sum_{k=1}^t x_k$. If we choose
\[
\gamma\leq \frac{1}{L},\quad 0<\gamma+\eta\leq \frac{1}{L},
\]
then for any $t\ge 1$, it holds that
\[
\ebb[f(\bar{x}_t)]-f^* \leq \frac{\|x_1-x^*\|^2}{2(\gamma+\eta) t}+\frac{\beta (f(x_1)-f^*)}{(1-\beta) t}+\Big(\frac{3 \beta \gamma}{2(1-\beta)}+\frac{\gamma+\eta}{2}\Big) \sigma^2,
\]
\end{theorem}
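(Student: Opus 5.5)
The plan is to run the standard SGD analysis on the auxiliary sequence $\{w_k\}$ from Lemma~\ref{lem:w-sgd}, and then pay for the gap between $w_k$ and $x_k$ with telescoping function values. Write $\alpha:=\gamma+\eta>0$. By Lemma~\ref{lem:w-sgd},
\[
\|w_{k+1}-x^*\|^2=\|w_k-x^*\|^2-2\alpha\langle g_k,w_k-x^*\rangle+\alpha^2\|g_k\|^2 .
\]
Take $\ebb_k$ and split $\langle \nabla f(x_k),w_k-x^*\rangle=\langle\nabla f(x_k),x_k-x^*\rangle+\langle \nabla f(x_k),w_k-x_k\rangle$. For the first piece, use \eqref{eq: cvx-L} with $y=x^*$:
\[
\langle\nabla f(x_k),x_k-x^*\rangle\ge f(x_k)-f^*+\tfrac{1}{2L}\|\nabla f(x_k)\|^2 .
\]
The $\tfrac{1}{2L}$ gradient term absorbs $\alpha^2\|\nabla f(x_k)\|^2$, since $\alpha\le 1/L$. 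By \eqref{eq:g-bound}, the variance contributes $\alpha^2\sigma^2$, which after division by $2\alpha t$ gives the $\frac{\gamma+\eta}{2}\sigma^2$ term.

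The cross term needs the identity $w_k-x_k=\frac{\beta}{1-\beta}(x_k-x_{k-1}+\gamma g_{k-1})$ for $k\ge2$, which follows from \eqref{def:wk}; the cross term vanishes for $k=1$. By \eqref{alg:g-sgdm}, $x_k-x_{k-1}+\gamma g_{k-1}=-\eta m_{k-1}$, but I will keep it in the first form. Convexity gives $\langle\nabla f(x_k),x_k-x_{k-1}\rangle\ge f(x_k)-f(x_{k-1})$. Hence
\[
-2\alpha\langle\nabla f(x_k),w_k-x_k\rangle\le -\tfrac{2\alpha\beta}{1-\beta}\big(f(x_k)-f(x_{k-1})\big)-\tfrac{2\alpha\beta\gamma}{1-\beta}\langle\nabla f(x_k),g_{k-1}\rangle .
\]
The first part telescopes over $k=2,\dots,t$ to $\frac{2\alpha\beta}{1-\beta}\big(f(x_1)-f(x_t)\big)\le\frac{2\alpha\beta}{1-\beta}\big(f(x_1)-f^*\big)$. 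After dividing by $2\alpha t$, this produces the $\frac{\beta(f(x_1)-f^*)}{(1-\beta)t}$ term.

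The main obstacle is the residual $-\langle\nabla f(x_k),g_{k-1}\rangle$, and this is where the $\frac{3\beta\gamma}{2(1-\beta)}\sigma^2$ term must come from. I would first write $g_{k-1}=\nabla f(x_{k-1})+\epsilon_{k-1}$. Then I would use
\[
-\langle a,b\rangle=\tfrac12\|a-b\|^2-\tfrac12\|a\|^2-\tfrac12\|b\|^2,\qquad \|\nabla f(x_k)-\nabla f(x_{k-1})\|^2\le 2L\big(f(x_{k-1})-f(x_k)-\langle\nabla f(x_k),x_{k-1}-x_k\rangle\big),
\]
the latter from \eqref{eq: cvx-L}, so that smoothness differences are controlled by function-value differences (telescoping) plus gradient norms. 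The negative gradient-norm terms, $-\frac12\|\nabla f(x_k)\|^2$ and the leftover part of $\frac{1}{2L}\|\nabla f(x_k)\|^2$, are then used to cancel what remains; this is where $\gamma\le 1/L$ is needed.

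For the noise cross term I use $\ebb[\langle\nabla f(x_k),\epsilon_{k-1}\rangle]$. It is not zero, since $x_k$ depends on $\xi_{k-1}$, but it can be bounded by $\langle\nabla f(x_k)-\nabla f(\tilde x_k),\epsilon_{k-1}\rangle$ with $\tilde x_k$ the noise-free step. Since $x_k-\tilde x_k=-(\gamma+(1-\beta)\eta)\epsilon_{k-1}$, Lipschitzness gives a multiple of $\sigma^2$, and Young's inequality gives another contribution. I expect these to total roughly $3\sigma^2/2$ per unit of $\frac{\beta\gamma}{1-\beta}$, matching the constant.

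Finally, sum $k=1,\dots,t$, drop $\|w_{t+1}-x^*\|^2$, use $w_1=x_1$, divide by $2\alpha t$, and apply Jensen's inequality $f(\bar x_t)\le\frac1t\sum_k f(x_k)$ to conclude.
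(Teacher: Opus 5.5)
There is a genuine gap in how you dispose of $\|\nabla f(x_k)-\nabla f(x_{k-1})\|^2$. Put $c:=\frac{2\alpha\beta}{1-\beta}$. You first spend the whole term $-c\langle\nabla f(x_k),x_k-x_{k-1}\rangle$ on plain convexity. The polarization identity then leaves $+\frac{c\gamma}{2}\|\nabla f(x_k)-\nabla f(x_{k-1})\|^2$, and your bound from \eqref{eq: cvx-L} turns this into
\[
c\gamma L\big(f(x_{k-1})-f(x_k)\big)+c\gamma L\,\langle\nabla f(x_k),x_k-x_{k-1}\rangle .
\]
The first piece telescopes to $c\gamma L(f(x_1)-f(x_t))$. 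It inflates the $\frac{\beta(f(x_1)-f^*)}{(1-\beta)t}$ term by the factor $1+\gamma L$, so the stated constant is lost. The second piece is not a gradient norm, as you assert, but an inner product with a positive coefficient. Convexity only bounds it from below. Bounding it from above by smoothness produces $\|x_k-x_{k-1}\|^2=\|\gamma g_{k-1}+\eta m_{k-1}\|^2$, i.e.\ the entire momentum history, which nothing in your argument controls. In effect you use the inner product $\langle\nabla f(x_k),x_k-x_{k-1}\rangle$ twice.

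Relatedly, your claim about where $\gamma\le 1/L$ enters is off: the gradient-norm terms cancel for every $\gamma\ge 0$. The $-\frac{c\gamma}{2}\|\nabla f(x_k)\|^2$ from polarization offsets the $+\frac{c\gamma}{2}\|\nabla f(x_k)\|^2$ from Young on the noise, and $\alpha\le 1/L$ handles $\alpha^2\|\nabla f(x_k)\|^2$.

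The repair, which is exactly the paper's route, is to apply \eqref{eq: cvx-L} with $(x,y)=(x_k,x_{k-1})$ directly to the momentum inner product, i.e.\ \eqref{eq:cvx-const-2}. This gives the same telescoping function values plus $-\frac{c}{2L}\|\nabla f(x_k)-\nabla f(x_{k-1})\|^2$. The net coefficient of the gradient difference is then $-\frac{c}{2}\big(\frac{1}{L}-\gamma\big)\le 0$, and this is the one place $\gamma\le 1/L$ is used. Equivalently, apply co-coercivity first, merge into $-c(1-\gamma L)\langle\nabla f(x_k),x_k-x_{k-1}\rangle$, and only then use convexity.

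Once this is fixed, your noise handling is fine and in fact sharper than required. Because you condition on $\mathcal F_{k-1}$ before expanding, no $\langle\epsilon_k,\epsilon_{k-1}\rangle$ term arises. Plain Young's inequality then gives the constant $\frac{\beta\gamma}{2(1-\beta)}$ instead of $\frac{3\beta\gamma}{2(1-\beta)}$; the paper's extra $\sigma^2$ comes from bounding $\ebb[\langle\epsilon_k,\epsilon_{k-1}\rangle]$, which is actually zero. The decoupling through $\tilde x_k$ is also valid, since $\tilde x_k$ is $\mathcal F_{k-2}$-measurable and $L(\gamma+(1-\beta)\eta)=L(\beta\gamma+(1-\beta)(\gamma+\eta))\le 1$. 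It is unnecessary, though, and ``I expect roughly $3\sigma^2/2$'' should be replaced by an actual count.
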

\begin{proof}[Proof of Theorem \ref{thm:cvx-const}]
We start by expanding $\|w_{k+1}-x^*\|^2$ using Lemma \ref{lem:w-sgd}:
\begin{equation}\label{eq:cvx-const-w}
\begin{aligned}
\|w_{k+1}-x^*\|^2=&\|w_k-x^*\|^2+2\langle w_{k+1}-w_k, w_k-x^*\rangle+\|w_{k+1}-w_k\|^2\\
= & \|w_k-x^*\|^2-2(\gamma+\eta)\langle g_k, w_k-x^*\rangle +(\gamma+\eta)^2\|g_k\|^2.
\end{aligned}
\end{equation}
For $k\ge 2$, we know from \eqref{def:wk} and $\ebb_k[g_k]=\nabla f(x_k)$ that
\begin{equation}\label{eq:cvx-const-ip}
    \begin{aligned}
        \ebb[\langle g_k, w_k-x^*\rangle] =&\frac{1}{1-\beta} \ebb[\langle g_k, x_k-\beta x_{k-1}+\beta \gamma g_{k-1}-(1-\beta) x^*\rangle]\\
        =&\ebb[\langle g_k, x_k-x^*\rangle] + \frac{\beta}{1-\beta}\ebb[\langle g_k, x_k-x_{k-1}\rangle] + \frac{\beta\gamma}{1-\beta}\ebb[\langle g_k, g_{k-1}\rangle]\\
        =&\ebb[\langle \nabla f(x_k), x_k-x^*\rangle] + \frac{\beta}{1-\beta}\ebb[\langle \nabla f(x_k), x_k-x_{k-1}\rangle] + \frac{\beta\gamma}{1-\beta}\ebb[\langle g_k, g_{k-1}\rangle].
    \end{aligned}
\end{equation}
Since $f$ is convex and $L$-smooth, it holds from \eqref{eq: cvx-L} and $\nabla f(x^*)=0$ that
\begin{align}
\langle\nabla f(x_k), x_k-x^*\rangle &\ge \frac{1}{2 L}\|\nabla f(x_k)\|^2+f(x_k)-f^* \label{eq:cvx-const-1},\\
\langle\nabla f(x_k), x_k-x_{k-1}\rangle &\ge \frac{1}{2 L}\|\nabla f(x_k)-\nabla f(x_{k-1})\|^2+f(x_k)-f(x_{k-1})\label{eq:cvx-const-2}.
\end{align}
To bound $\ebb[\langle g_k, g_{k-1}\rangle]$, denote $\epsilon_k:=g_k-\nabla f(x_k)$ for all $k\ge 1$. Then it follows from $\ebb_k[\epsilon_k]=0$ and $\ebb_k[\|\epsilon_k\|^2] \leq \sigma^2$ that
\begin{equation}\label{eq:cvx-const-3}
\begin{aligned}
\ebb[\langle g_k, g_{k-1}\rangle] & =\ebb[\langle\nabla f(x_k), \nabla f(x_{k-1})\rangle]+\ebb[\langle\epsilon_k, \epsilon_{k-1}\rangle]+\ebb[\langle\epsilon_k, \nabla f(x_{k-1})]+\ebb[\langle\nabla f(x_k), \epsilon_{k-1})] \\
& \ge \ebb[\langle\nabla f(x_k), \nabla f(x_{k-1})\rangle]-\frac{1}{2}\ebb[\|\epsilon_k\|^2] -\frac{1}{2} \ebb[\|\epsilon_{k-1}\|^2]-\frac{1}{2}\ebb[\|\nabla f(x_k)\|^2]- \frac{1}{2}\ebb[\|\epsilon_{k-1}\|^2]\\
& \ge \ebb[\langle\nabla f(x_k), \nabla f(x_{k-1})\rangle]-\frac{3}{2} \sigma^2-\frac{1}{2} \ebb[\|\nabla f(x_k)\|^2],
\end{aligned}
\end{equation}
where the first inequality holds from the basic inequality $ab\ge -(a^2+b^2)/2$ and 
\[
\ebb[\langle\epsilon_k, \nabla f(x_{k-1})]=\ebb[\ebb_k[\langle\epsilon_k, \nabla f(x_{k-1})]]=\ebb[\langle\ebb_k[\epsilon_k], \nabla f(x_{k-1})]=0.
\]
Applying \eqref{eq:cvx-const-1}, \eqref{eq:cvx-const-2}, and \eqref{eq:cvx-const-3} on \eqref{eq:cvx-const-ip}, we obtain for $k\ge 2$ that
\begin{multline}\label{eq:cvx-const-ip2}
    \ebb[\langle g_k, w_k-x^*\rangle]\ge \frac{1}{2 L}\ebb[\|\nabla f(x_k)\|^2]+\ebb[f(x_k)-f^*] + \frac{\beta}{2L(1-\beta)}\ebb[\|\nabla f(x_k)-\nabla f(x_{k-1})\|^2] \\+ \frac{\beta}{1-\beta}\ebb[f(x_k)-f(x_{k-1})] + \frac{\beta\gamma}{1-\beta}\ebb[\langle\nabla f(x_k), \nabla f(x_{k-1})\rangle]-\frac{3\beta\gamma}{2(1-\beta)} \sigma^2-\frac{\beta\gamma}{2(1-\beta)} \ebb[\|\nabla f(x_k)\|^2].
\end{multline}
If $k=1$, then it follows from $w_1=x_1$ and \eqref{eq:cvx-const-1} that
\begin{equation}\label{eq:cvx-const-k=1}
    \ebb[\langle g_1, w_1-x^*\rangle] = \langle \nabla f(x_1), x_1-x^*\rangle\ge \frac{1}{2 L}\|\nabla f(x_1)\|^2+f(x_1)-f^*.
\end{equation}

For notational convenience, denote $\Delta_k:=f(x_k)-f^*\ge 0$. For $k\ge2$, taking expectations on both sides of \eqref{eq:cvx-const-w} and applying \eqref{eq:cvx-const-ip2} and \eqref{eq:g-bound} leads to
\begin{equation}\label{eq:cvx-const-w-2}
\begin{aligned}
&\ebb[\|w_{k+1}-x^*\|^2] = \ebb[\|w_k-x^*\|^2]-2(\gamma+\eta)\ebb[\langle g_k, w_k-x^*\rangle] +(\gamma+\eta)^2\ebb[\|g_k\|^2] \\
\leq & \ebb[\|w_k-x^*\|^2]-\frac{\gamma+\eta}{L} \ebb[\|\nabla f(x_k)\|^2]-2(\gamma+\eta) \ebb[\Delta_k] - \frac{\beta(\gamma+\eta)}{L(1-\beta)} \ebb[\|\nabla f(x_k)-\nabla f(x_{k-1})\|^2]\\
&-\frac{2 \beta(\gamma+\eta)}{1-\beta} \ebb[\Delta_k-\Delta_{k-1}] - \frac{2\beta\gamma(\gamma+\eta)}{1-\beta}\ebb[\langle\nabla f(x_k), \nabla f(x_{k-1})\rangle]+\frac{3\beta\gamma(\gamma+\eta)}{1-\beta} \sigma^2\\
&+\frac{\beta\gamma(\gamma+\eta)}{1-\beta} \ebb[\|\nabla f(x_k)\|^2]+(\gamma+\eta)^2 \ebb[\|\nabla f(x_k)\|^2] + (\gamma+\eta)^2\sigma^2\\
\leq & \ebb[\|w_k-x^*\|^2]-2(\gamma+\eta) \ebb[\Delta_k]-\frac{2 \beta(\gamma+\eta)}{1-\beta} \ebb[\Delta_k-\Delta_{k-1}]+\Big(\frac{3 \beta \gamma(\gamma+\eta)}{1-\beta}+(\gamma+\eta)^2\Big) \sigma^2 \\
& -\Big(\frac{\gamma+\eta}{L}-\frac{\beta \gamma(\gamma+\eta)}{1-\beta}-(\gamma+\eta)^2+\frac{\beta \gamma(\gamma+\eta)}{1-\beta}\Big) \ebb[\|\nabla f(x_k)\|^2]-\frac{\beta \gamma(\gamma+\eta)}{1-\beta} \ebb[\|\nabla f(x_{k-1})\|^2] \\
& -\Big(\frac{1}{L}-\gamma\Big) \frac{\beta(\gamma+\eta)}{1-\beta} \ebb[\|\nabla f(x_k)-\nabla f(x_{k-1})\|^2],
\end{aligned}
\end{equation}
where the last inequality holds by expanding the square term with coefficient $-\frac{\beta\gamma(\gamma+\eta)}{1-\beta}$ on the RHS of 
\[
-\frac{\beta(\gamma+\eta)}{L(1-\beta)} \ebb[\|\nabla f(x_k)-\nabla f(x_{k-1})\|^2] = \Big[-\frac{\beta\gamma(\gamma+\eta)}{1-\beta} - \Big(\frac{1}{L}-\gamma\Big) \frac{\beta(\gamma+\eta)}{1-\beta}\Big] \ebb[\|\nabla f(x_k)-\nabla f(x_{k-1})\|^2].
\]
Since $0<\gamma+\eta \leq 1/L$, we know that
\begin{equation}\label{eq:cvx-const-gamma+eta}
  \frac{\gamma+\eta}{L}-(\gamma+\eta)^2\ge 0,  
\end{equation}
which together with $0\leq\gamma\leq 1/L$ indicates that the coefficients of the last three terms are all non-positive, and hence for any $k\ge 2$ we have from \eqref{eq:cvx-const-w-2} that
\begin{equation}\label{eq:cvx-const-w-3}
    \ebb[\|w_{k+1}-x^*\|^2]\leq \ebb[\|w_k-x^*\|^2]-2(\gamma+\eta) \ebb[\Delta_k]-\frac{2 \beta(\gamma+\eta)}{1-\beta} \ebb[\Delta_k-\Delta_{k-1}]+\Big(\frac{3 \beta \gamma(\gamma+\eta)}{1-\beta}+(\gamma+\eta)^2\Big) \sigma^2.
\end{equation}
On the other hand, if $k=1$, then taking expectations on both sides of \eqref{eq:cvx-const-w} and applying \eqref{eq:cvx-const-k=1} and \eqref{eq:g-bound}, we obtain
\begin{equation}\label{eq:cvx-const-w-k=1}
\begin{aligned}
&\ebb[\|w_{2}-x^*\|^2] = \|x_1-x^*\|^2-2(\gamma+\eta)\ebb[\langle g_1, w_1-x^*\rangle] +(\gamma+\eta)^2\ebb[\|g_1\|^2] \\
\leq & \|x_1-x^*\|^2-\frac{\gamma+\eta}{L} \|\nabla f(x_1)\|^2-2(\gamma+\eta) \Delta_1 + (\gamma+\eta)^2 \|\nabla f(x_1)\|^2 + (\gamma+\eta)^2\sigma^2\\
\leq & \|x_1-x^*\|^2-2(\gamma+\eta) \Delta_1 + (\gamma+\eta)^2\sigma^2,
\end{aligned}
\end{equation}
where the last inequality holds due to \eqref{eq:cvx-const-gamma+eta}.

Summing \eqref{eq:cvx-const-w-k=1} and \eqref{eq:cvx-const-w-3} over $k=2,\dots,t$ and rearranging the terms gives
\begin{equation}\label{eq:cvx-const-opt}
\begin{aligned}
    2(\gamma+\eta) \sum_{k=1}^t \ebb[\Delta_k] \leq &\|x_1-x^*\|^2-\ebb[\|w_{t+1}-x^*\|^2]+\frac{2 \beta(\gamma+\eta)}{1-\beta} (\Delta_1-\ebb[\Delta_t])+\Big(\frac{3 \beta \gamma(\gamma+\eta)}{1-\beta}+(\gamma+\eta)^2\Big) t \sigma^2\\
    \leq & \|x_1-x^*\|^2+\frac{2 \beta(\gamma+\eta)}{1-\beta}(f(x_1)-f^*)+\Big(\frac{3 \beta \gamma(\gamma+\eta)}{1-\beta}+(\gamma+\eta)^2\Big) t \sigma^2,
\end{aligned}
\end{equation}
where the last inequality follows by dropping the non-positive terms. Finally, dividing both sides of \eqref{eq:cvx-const-opt} by $2(\gamma+\eta)t$ and using the convexity of $f$, we derive
\[
\ebb[f(\bar{x}_t)]-f^* \leq \frac{1}{t}\sum_{k=1}^t\ebb[\Delta_k]\leq\frac{\|x_1-x^*\|^2}{2(\gamma+\eta) t}+\frac{\beta (f(x_1)-f^*)}{(1-\beta) t}+\Big(\frac{3 \beta \gamma}{2(1-\beta)}+\frac{\gamma+\eta}{2}\Big) \sigma^2,
\]
which completes the proof.
\end{proof}

In Theorem \ref{thm:cvx-const}, the step size condition $\gamma+\eta\leq 1/L$ reveals an explicit trade-off between $\gamma$ and $\eta$, preventing them from being large simultaneously. Besides, since the upper bound on $\gamma+\eta$ is independent of $\beta$, Theorem \ref{thm:cvx-const} favors the choice $\beta=0$. We now show that when exact gradients are available, the convergence behavior of \eqref{alg:g-sgdm} improves substantially. In this regime, the restrictive coupling between $\gamma$ and $\eta$ can be relaxed, leading to a larger admissible step size region.
\begin{theorem}[Deterministic Case]\label{thm:cvx-const-deter}
Let the objective function $f$ in \eqref{eq:main-prob} be convex and $L$-smooth. Let $\{x_k\}_{k\ge 1}$ be the sequence generated by \eqref{alg:g-sgdm} with true  gradients $g_k=\nabla f(x_k)$ and constant parameters $\beta\in [0,1)$, $\gamma\ge 0$, and $\eta\in \rbb$. Denote $\bar{x}_t:=\frac{1}{t}\sum_{k=1}^t x_k$. If the step sizes $\gamma$ and $\eta$ satisfy
\begin{equation}\label{eq:cvx-deter-lr}
    \gamma \leq \frac{1}{L},\quad -\gamma<\eta \leq \frac{1}{L}+\frac{2 \beta-1}{1-\beta} \gamma,
\end{equation}
then for any $t\ge 1$, it holds that
\[
f(\bar{x}_t)-f^* \leq \frac{\|x_1-x^*\|^2}{2(\gamma+\eta) t}+\frac{\beta (f(x_1)-f^*+\gamma\|\nabla f(x_1)\|^2/2)}{(1-\beta) t}.
\]
\end{theorem}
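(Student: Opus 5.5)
The plan is to rerun the proof of Theorem~\ref{thm:cvx-const} with $\sigma=0$, but without the Young-type splitting in \eqref{eq:cvx-const-3}. Instead, the cross term $\langle\nabla f(x_k),\nabla f(x_{k-1})\rangle$ is kept exactly, and the gradient-norm terms are telescoped across iterations.

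\textbf{Setup and per-step inequality.} With $g_k=\nabla f(x_k)$, Lemma~\ref{lem:w-sgd} still gives $w_{k+1}=w_k-(\gamma+\eta)\nabla f(x_k)$, and \eqref{eq:cvx-const-w} holds deterministically. Write $a_k:=\nabla f(x_k)$ and $\Delta_k:=f(x_k)-f^*$. For $k\ge2$, I would use \eqref{eq:cvx-const-ip} together with \eqref{eq:cvx-const-1} and \eqref{eq:cvx-const-2}, and substitute the identity $2\langle a_k,a_{k-1}\rangle=\|a_k\|^2+\|a_{k-1}\|^2-\|a_k-a_{k-1}\|^2$. This should give
\begin{align*}
\|w_{k+1}-x^*\|^2\le{}&\|w_k-x^*\|^2-2(\gamma+\eta)\Delta_k-\tfrac{2\beta(\gamma+\eta)}{1-\beta}(\Delta_k-\Delta_{k-1})\\
&-\Big(\tfrac1L-\gamma\Big)\tfrac{\beta(\gamma+\eta)}{1-\beta}\|a_k-a_{k-1}\|^2
+(\gamma+\eta)\Big(\gamma+\eta-\tfrac1L-\tfrac{\beta\gamma}{1-\beta}\Big)\|a_k\|^2
-\tfrac{\beta\gamma(\gamma+\eta)}{1-\beta}\|a_{k-1}\|^2 .
\end{align*}
The $\|a_k-a_{k-1}\|^2$ term is nonpositive because $\gamma\le1/L$ and $\gamma+\eta>0$.

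For $k=1$, the same computation as in \eqref{eq:cvx-const-w-k=1} gives
\[
\|w_2-x^*\|^2\le\|x_1-x^*\|^2-2(\gamma+\eta)\Delta_1+(\gamma+\eta)\big(\gamma+\eta-\tfrac1L\big)\|a_1\|^2 .
\]
Unlike before, the coefficient of $\|a_1\|^2$ here may now be positive.

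\textbf{Telescoping the gradient norms.} I would sum these inequalities over $k=1,\dots,t$ and shift the $-\|a_{k-1}\|^2$ terms by one index. For $2\le k\le t-1$, the coefficient of $\|a_k\|^2$ becomes
\[
(\gamma+\eta)\Big(\gamma+\eta-\tfrac1L-\tfrac{2\beta\gamma}{1-\beta}\Big).
\]
This is $\le0$ exactly when $\eta\le\frac1L+\frac{2\beta-1}{1-\beta}\gamma$, which is where the condition \eqref{eq:cvx-deter-lr} comes from. For $k=t$, the coefficient lacks the shifted contribution, but it is even more negative, so it can be dropped. For $k=1$, the net coefficient is $(\gamma+\eta)\big(\gamma+\eta-\frac1L-\frac{\beta\gamma}{1-\beta}\big)$ when $t\ge2$. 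By the upper bound on $\eta$, this is at most $\frac{\beta\gamma(\gamma+\eta)}{1-\beta}$. When $t=1$, I would check directly that $\gamma+\eta-\frac1L\le\frac{\beta\gamma}{1-\beta}$, which again follows from the step-size condition.

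\textbf{Conclusion.} The $\Delta$ terms telescope exactly as in \eqref{eq:cvx-const-opt}. Dropping the nonpositive terms yields
\[
2(\gamma+\eta)\sum_{k=1}^t\Delta_k\le\|x_1-x^*\|^2+\tfrac{2\beta(\gamma+\eta)}{1-\beta}\Delta_1+\tfrac{\beta\gamma(\gamma+\eta)}{1-\beta}\|\nabla f(x_1)\|^2 .
\]
Dividing by $2(\gamma+\eta)t>0$ and applying Jensen's inequality to $\bar x_t$ then gives the claimed bound, including the extra $\gamma\|\nabla f(x_1)\|^2/2$ term.

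The main obstacle is the bookkeeping in the index shift, in particular the boundary terms at $k=1$ and $k=t$. The $k=1$ boundary is precisely what produces the additional $\gamma\|\nabla f(x_1)\|^2$ term in the statement, so the constants there have to be tracked carefully.
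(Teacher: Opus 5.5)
Your per-step inequalities are correct: the $k\ge2$ one matches \eqref{eq:cvx-deter-1}, and the $k=1$ one is the intermediate step of \eqref{eq:cvx-deter-k=1}. Your final bound is also right. However, two justifications in the index-shift step are wrong, and one of them carries weight.

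First, the interior coefficient $(\gamma+\eta)\big(\gamma+\eta-\frac1L-\frac{2\beta\gamma}{1-\beta}\big)$ is nonpositive if and only if $\eta\le\frac1L+\frac{3\beta-1}{1-\beta}\gamma$. It is not equivalent to $\eta\le\frac1L+\frac{2\beta-1}{1-\beta}\gamma$. The hypothesis implies your interior condition, but it is strictly stronger by $\frac{\beta\gamma}{1-\beta}$. So the interior terms are not where the condition comes from.

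Second, the coefficient at $k=t$ is $(\gamma+\eta)\big(\gamma+\eta-\frac1L-\frac{\beta\gamma}{1-\beta}\big)$. This exceeds the interior coefficient by $\frac{\beta\gamma(\gamma+\eta)}{1-\beta}\ge0$, so it is \emph{less} negative, not more. Its nonpositivity therefore does not follow from the interior case, and the sentence you use to drop it is false as written.

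The step can be repaired, because the hypothesis $\eta\le\frac1L+\frac{2\beta-1}{1-\beta}\gamma$ is exactly equivalent to $\gamma+\eta\le\frac1L+\frac{\beta\gamma}{1-\beta}$. That is precisely the statement that the $k=t$ coefficient is $\le0$, and it is where the condition really enters.

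Once you see this, the telescoping is unnecessary, and the paper argues without it. By \eqref{eq:cvx-deter-gamma+eta}, the same equivalence makes the coefficient of $\|\nabla f(x_k)\|^2$ nonpositive in every individual inequality with $k\ge2$. The paper then drops all gradient terms step by step. At $k=1$ it uses the bound $(\gamma+\eta)(\gamma+\eta-\frac1L)\le\frac{\beta\gamma(\gamma+\eta)}{1-\beta}$, which produces the $\gamma\|\nabla f(x_1)\|^2$ term.

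Your shift, done correctly, gives only a minor refinement. For $t\ge2$ the net coefficient of $\|\nabla f(x_1)\|^2$ is $(\gamma+\eta)\big(\gamma+\eta-\frac1L-\frac{\beta\gamma}{1-\beta}\big)\le0$, not merely $\le\frac{\beta\gamma(\gamma+\eta)}{1-\beta}$. So the extra $\gamma\|\nabla f(x_1)\|^2/2$ term is needed only when $t=1$. The paper could obtain the same refinement by not discarding the $-\frac{\beta\gamma(\gamma+\eta)}{1-\beta}\|\nabla f(x_1)\|^2$ term in the $k=2$ inequality.
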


\begin{proof}[Proof of Theorem \ref{thm:cvx-const-deter}]
Imitating the deductions of \eqref{eq:cvx-const-w} and \eqref{eq:cvx-const-ip} with $g_k=\nabla f(x_k)$, we have for $k\ge 2$ that
\begin{equation}\label{eq:cvx-deter}
\begin{aligned}
& \|w_{k+1}-x^*\|^2=\|w_k-x^*\|^2+2\langle w_{k+1}-w_k, w_k-x^*\rangle+\|w_{k+1}-w_k\|^2\\
= & \|w_k-x^*\|^2-2(\gamma+\eta)\langle \nabla f(x_k), w_k-x^*\rangle +(\gamma+\eta)^2\|\nabla f(x_k)\|^2\\
= & \|w_k-x^*\|^2-\frac{2(\gamma+\eta)}{1-\beta}\langle \nabla f(x_k), x_k-\beta x_{k-1}+\beta \gamma \nabla f(x_{k-1})-(1-\beta) x^*\rangle +(\gamma+\eta)^2\|\nabla f(x_k)\|^2\\
 = & \|w_k-x^*\|^2-2(\gamma+\eta)\langle \nabla f(x_k), x_k-x^*\rangle -\frac{2\beta(\gamma+\eta)}{1-\beta}\langle \nabla f(x_k), x_k-x_{k-1}\rangle\\
 & - \frac{2\beta\gamma(\gamma+\eta)}{1-\beta}\langle \nabla f(x_k), \nabla f(x_{k-1})\rangle +(\gamma+\eta)^2\|\nabla f(x_k)\|^2.
\end{aligned}
\end{equation}
Applying \eqref{eq:cvx-const-1} and \eqref{eq:cvx-const-2} on \eqref{eq:cvx-deter} and rearranging the terms further gives
\begin{align}
    &\|w_{k+1}-x^*\|^2 - \|w_k-x^*\|^2 \notag\\
    \leq& - \frac{\gamma+\eta}{L}\|\nabla f(x_k)\|^2 - 2(\gamma+\eta)(f(x_k)-f^*)-\frac{\beta(\gamma+\eta)}{L(1-\beta)}\|\nabla f(x_k)-\nabla f(x_{k-1})\|^2\notag\\
    &- \frac{2 \beta(\gamma+\eta)}{1-\beta}(f(x_k)-f(x_{k-1}))- \frac{2\beta\gamma(\gamma+\eta)}{1-\beta}\langle \nabla f(x_k), \nabla f(x_{k-1})\rangle +(\gamma+\eta)^2\|\nabla f(x_k)\|^2\notag\\
    =&-2 (\gamma+\eta)(f(x_k)-f^*)-\frac{2 \beta(\gamma+\eta)}{1-\beta}(f(x_k)-f(x_{k-1})) -\Big(\frac{\gamma+\eta}{L}-(\gamma+\eta)^2+\frac{\beta \gamma(\gamma+\eta)}{1-\beta}\Big)\|\nabla f(x_k)\|^2\notag\\
    &-\frac{\beta \gamma(\gamma+\eta)}{1-\beta}\|\nabla f(x_{k-1})\|^2-\Big(\frac{1}{L}-\gamma\Big) \frac{\beta(\gamma+\eta)}{1-\beta} \|\nabla f(x_k)-\nabla f(x_{k-1})\|^2\label{eq:cvx-deter-1}.
\end{align}
From the step size condition \eqref{eq:cvx-deter-lr}, we know that $\gamma+\eta>0$ and hence
\begin{equation}\label{eq:cvx-deter-gamma+eta}
    \begin{aligned}
    &\frac{\gamma+\eta}{L}+\frac{\beta \gamma(\gamma+\eta)}{1-\beta}-(\gamma+\eta)^2 = (\gamma+\eta)\Big(\frac{1}{L}+\frac{\beta \gamma}{1-\beta}-\gamma-\eta\Big)\\
    \ge& (\gamma+\eta)\Big(\frac{1}{L}+\frac{\beta \gamma}{1-\beta}-\gamma-\frac{1}{L}-\frac{2 \beta-1}{1-\beta} \gamma\Big)=0.
\end{aligned}
\end{equation}
It then follows that the coefficients of the last three terms on the RHS of \eqref{eq:cvx-deter-1} are all non-positive due to $0\leq\gamma\leq 1/L$. Then for $k\ge 2$, we further obtain from \eqref{eq:cvx-deter-1} that
\begin{equation}\label{eq:cvx-deter-2}
    \|w_{k+1}-x^*\|^2 - \|w_k-x^*\|^2 \leq -2 (\gamma+\eta)(f(x_k)-f^*)-\frac{2 \beta(\gamma+\eta)}{1-\beta}(f(x_k)-f(x_{k-1})).
\end{equation}
On the other hand, if $k=1$, then we know from $w_1=x_1$ and \eqref{eq:cvx-const-1} that
\begin{equation}\label{eq:cvx-deter-k=1}
    \begin{aligned}
        \|w_2-x^*\|^2 = &\|x_1-x^*\|^2 - 2(\gamma+\eta)\langle \nabla f(x_1), x_1-x^*\rangle +(\gamma+\eta)^2\|\nabla f(x_1)\|^2\\
        \leq & \|x_1-x^*\|^2 - \frac{\gamma+\eta}{L}\|\nabla f(x_1)\|^2 - 2(\gamma+\eta)(f(x_1)-f^*) +(\gamma+\eta)^2\|\nabla f(x_1)\|^2\\
        \leq & \|x_1-x^*\|^2 - 2(\gamma+\eta)(f(x_1)-f^*) +\frac{\beta \gamma(\gamma+\eta)}{1-\beta}\|\nabla f(x_1)\|^2,
    \end{aligned}
\end{equation}
where the last inequality holds from \eqref{eq:cvx-deter-gamma+eta}.

Summing \eqref{eq:cvx-deter-k=1} and \eqref{eq:cvx-deter-2} over $k=2, \dots, t$ and rearranging the terms gives
\begin{equation}\label{eq:cvx-deter-3}
\begin{aligned}
    &2(\gamma+\eta) \sum_{k=1}^t (f(x_k)-f^*)\\
    \leq & \|x_1-x^*\|^2-\|w_{t+1}-x^*\|^2+\frac{2 \beta(\gamma+\eta)}{1-\beta} (f(x_1)-f(x_t))+\frac{\beta \gamma(\gamma+\eta)}{1-\beta}\|\nabla f(x_1)\|^2\\
    \leq & \|x_1-x^*\|^2+\frac{\beta(\gamma+\eta)}{1-\beta}\big[2(f(x_1)-f^*)+\gamma\|\nabla f(x_1)\|^2\big],
\end{aligned}
\end{equation}
where the last inequality follows from $f(x_t)\ge f^*$.
Finally, dividing both sides of \eqref{eq:cvx-deter-3} by $2(\gamma+\eta)t$ and using the convexity of $f$ leads to
\[
f(\bar{x}_t)-f^* \leq \frac{1}{t}\sum_{k=1}^t (f(x_k)-f^*)\leq\frac{\|x_1-x^*\|^2}{2(\gamma+\eta) t}+\frac{\beta}{(1-\beta)t}\big(f(x_1)-f^* +\gamma\|\nabla f(x_1)\|^2/2\big),
\]
which completes the proof.
\end{proof}
\begin{remark}
    Note that while $\gamma = \frac{1}{L}$ is a standard choice for vanilla gradient descent, the step size $\eta$ in our framework can exceed $1/L$ when $\beta>0.5$. This partially explains the benefit of incorporating momentum. In particular, the maximal admissible value $\frac{1}{L}+\frac{2\beta-1}{1-\beta}\gamma$ is monotonically increasing in $\beta$. Consequently, substantially larger step sizes $\eta$ can be employed when $\beta$ is chosen close to one, as is common in practice (e.g., $\beta=0.9$ or 0.99), potentially leading to faster convergence.
\end{remark}

\begin{remark}
By setting $\gamma=0$ in Theorem \ref{thm:cvx-const-deter}, we obtain an $O(1/t)$ ergodic convergence rate for the heavy-ball (HB) method (i.e., \eqref{alg:sgd-hb} with constant parameters) for any $\beta\in[0,1)$ and $0<\eta\leq 1/L$:
\begin{equation}\label{eq:conv-hb-deter}
    f(\bar{x}_t)-f^* \leq \frac{\|x_1-x^*\|^2}{2\eta t}+\frac{\beta (f(x_1)-f^*)}{(1-\beta) t}.
\end{equation}
Moreover, since $\gamma\leq 1/L$, choosing $\eta=\frac{\beta\gamma}{1-\beta}$ satisfies the step size condition \eqref{eq:cvx-deter-lr} as $\frac{\beta\gamma}{1-\beta}=\gamma+\frac{2\beta-1}{1-\beta}\gamma\leq \frac{1}{L}+\frac{2\beta-1}{1-\beta}\gamma$.
Therefore, substituting $\eta=\frac{\beta\gamma}{1-\beta}$ into Theorem \ref{thm:cvx-const-deter} yields an $O(1/t)$ ergodic convergence rate for Nesterov's accelerated gradient (NAG) method (i.e., \eqref{alg:sgd-nesterov} with constant parameters) for any $\beta\in[0,1)$ and $0<\gamma\leq 1/L$:
\begin{equation}\label{eq:conv-nes-deter}
  f(\bar{x}_t)-f^* \leq \frac{(1-\beta)\|x_1-x^*\|^2}{2\gamma t}+\frac{\beta (f(x_1)-f^*+\gamma\|\nabla f(x_1)\|^2/2)}{(1-\beta) t}. 
\end{equation}
Similar convergence results for HB in \eqref{eq:conv-hb-deter} and NAG in \eqref{eq:conv-nes-deter} were previously established by \citet{ghadimi2015global} via separate analyses. In contrast, these bounds arise here as direct consequences of the unified analysis developed in Theorem \ref{thm:cvx-const-deter}. Furthermore, convergence guarantees for a broad class of momentum schemes can be derived for both deterministic settings (as in Theorem \ref{thm:cvx-const-deter}) and stochastic settings (as in Theorem \ref{thm:cvx-const}).
\end{remark}

\subsection{Improved Convergence Results with Time-varying Parameters}
While Theorems \ref{thm:cvx-const} and \ref{thm:cvx-const-deter} establish convergence guarantees for a broad class of constant parameter choices, the resulting rates are limited to $O(1/t)$. Consequently, the well-known optimal rate $O(1/t^2)$ achieved by NAG is not covered by these results. This limitation stems from the fact that the $O(1/t^2)$ rate is attained through the use of time-varying momentum parameters rather than constant ones.
In this subsection, we therefore turn to the case of time-varying parameters and show that, under suitable conditions, \eqref{alg:g-sgdm} enjoys improved convergence guarantees. In particular, our analysis recovers the accelerated $O(1/t^2)$ rate of NAG as a direct consequence.

Recall that we construct an auxiliary sequence $\{w_k\}_{k\ge 1}$ defined in \eqref{def:wk} to facilitate the convergence analysis when using constant parameters in \eqref{alg:g-sgdm}. However, a similar construction of $\{w_k\}_{k\ge 1}$ with time-varying parameters $\beta_k, \gamma_k$, and $\eta_k$ replacing their constant counterparts is generally not useful, because the SGD-like property (see Lemma \ref{lem:w-sgd}) no longer holds. To preserve this property, we impose the following condition on the parameters: there exists a sequence $\{\theta_k\}_{k\ge 1}$ with $\theta_k\in (0, 1)$ such that
\begin{equation}\label{eq:cvx-cond-beta}
\frac{\beta_k\eta_k}{\eta_{k-1}}=\theta_{k+1}\Big(\frac{1}{\theta_k}-1\Big),~\forall k\ge 2.
\end{equation}
It is straightforward to verify that $\beta_k\in [0, 1)$ holds under condition \eqref{eq:cvx-cond-beta} in many cases (e.g., with constant $\eta_k\equiv\eta\neq0$ and non-increasing $\{\theta_k\}_{k\ge 1}$). Using $\{\theta_k\}_{k\ge 1}$, we define an auxiliary sequence $\{v_k\}_{k\ge 1}$ as follows:
\begin{equation}\label{def:vk}
v_k= \begin{cases} x_1, \quad \text{if } k=1 &  \\ \frac{1}{\theta_k}\big(x_k-(1-\theta_k) x_{k-1}+(1-\theta_k) \gamma_{k-1} g_{k-1}\big), & \text{if } k \ge 2.\end{cases}
\end{equation}
We also introduce an intermediate sequence $\{y_k\}_{k\ge 1}$ defined by
\begin{equation}\label{def:yk}
y_k= \begin{cases} x_1, \quad \text{if } k=1 &  \\ x_{k-1}-\gamma_{k-1}g_{k-1}, & \text{if } k \ge 2.\end{cases}
\end{equation}
In the upcoming lemma, we show that $\{v_k\}_{k\ge 1}$ exhibits an SGD-like behavior, similar to Lemma~\ref{lem:w-sgd}. Furthermore, the difference $v_k-x_k$ is proportional to $x_k-y_k$. These properties provide crucial technical support for establishing refined convergence results.
\begin{lemma}\label{lem:v-sgd}
Consider the sequences $\{v_k\}_{k\ge 1}$ and $\{y_k\}_{k\ge 1}$ defined in \eqref{def:vk} and \eqref{def:yk} respectively, where $\{x_k\}_{k\ge 1}$ are the iterates generated by \eqref{alg:g-sgdm}. Let the condition \eqref{eq:cvx-cond-beta} hold.
Then for any $k\ge 1$, it holds that
\begin{align}
    &v_{k+1}=v_k-\Big(\gamma_k+\frac{1-\beta_k}{\theta_{k+1}}\eta_k\Big) g_k,\label{eq:v-sgd}\\
    &v_k-x_k = \Big(\frac{1}{\theta_k}-1\Big)(x_k-y_k)\label{eq:v-x}
\end{align}
\end{lemma}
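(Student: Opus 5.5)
The plan is to prove both identities by direct algebra from the definitions \eqref{def:vk} and \eqref{def:yk} and the update rule \eqref{alg:g-sgdm}. The key simplification is to write $v_k$ in terms of $y_k$ and the momentum $m_{k-1}$.

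\textbf{Step 1: the identity \eqref{eq:v-x}.} For $k=1$ both sides vanish, since $v_1=x_1=y_1$. For $k\ge 2$, observe that \eqref{def:vk} reads
\[
\theta_k v_k = x_k-(1-\theta_k)\big(x_{k-1}-\gamma_{k-1}g_{k-1}\big) = x_k-(1-\theta_k)y_k .
\]
Therefore $v_k-x_k=\frac{1-\theta_k}{\theta_k}(x_k-y_k)$, which is \eqref{eq:v-x}.

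\textbf{Step 2: a second expression for $v_k$.} The second equation of \eqref{alg:g-sgdm} gives $x_k = y_k-\eta_{k-1}m_{k-1}$ for $k\ge 2$. Substituting this into the expression for $\theta_k v_k$ from Step 1 yields
\[
v_k = y_k-\frac{\eta_{k-1}}{\theta_k}m_{k-1}, \qquad k\ge 2 .
\]

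\textbf{Step 3: the identity \eqref{eq:v-sgd}.} I first treat $k=1$ separately, as in Lemma~\ref{lem:w-sgd}. Since $m_0=0$, we have $m_1=(1-\beta_1)g_1$, and hence $x_2=x_1-\gamma_1g_1-(1-\beta_1)\eta_1 g_1$. Plugging this into $v_2$ gives
\[
\theta_2(v_2-x_1)=x_2-x_1+(1-\theta_2)\gamma_1g_1=-\theta_2\gamma_1 g_1-(1-\beta_1)\eta_1g_1 .
\]
Dividing by $\theta_2$ gives the claim for $k=1$.

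For $k\ge2$, I use the formula from Step 2 at indices $k$ and $k+1$, together with $m_k=\beta_k m_{k-1}+(1-\beta_k)g_k$. Since $y_{k+1}=x_k-\gamma_k g_k$ and $x_k=y_k-\eta_{k-1}m_{k-1}$, we get $y_{k+1}-y_k=-\eta_{k-1}m_{k-1}-\gamma_kg_k$. Hence
\[
v_{k+1}-v_k = -\gamma_k g_k-\frac{1-\beta_k}{\theta_{k+1}}\eta_k g_k + \Big(-\eta_{k-1}-\frac{\beta_k\eta_k}{\theta_{k+1}}+\frac{\eta_{k-1}}{\theta_k}\Big)m_{k-1}.
\]

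\textbf{Main step: cancelling the momentum term.} It remains to show that the coefficient of $m_{k-1}$ vanishes, and this is exactly where condition \eqref{eq:cvx-cond-beta} is used. That condition rewritten gives $\frac{\beta_k\eta_k}{\theta_{k+1}}=\eta_{k-1}\big(\frac{1}{\theta_k}-1\big)$. With this, the bracket becomes $-\eta_{k-1}-\frac{\eta_{k-1}}{\theta_k}+\eta_{k-1}+\frac{\eta_{k-1}}{\theta_k}=0$, which completes the proof of \eqref{eq:v-sgd}.

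The computation is otherwise routine. The only care needed is separating the $k=1$ case, where $m_0=0$ and \eqref{eq:cvx-cond-beta} is not assumed, and checking the index bookkeeping $\eta_{k-1}$ versus $\eta_k$.
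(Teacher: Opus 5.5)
Your proof is correct and follows essentially the same route as the paper. Both arguments use direct algebra from \eqref{def:vk}, \eqref{def:yk} and the update rule, treat $k=1$ separately via $m_0=0$, and use condition \eqref{eq:cvx-cond-beta} to cancel the $m_{k-1}$ term. The only cosmetic difference is that you first write $v_k = y_k - \frac{\eta_{k-1}}{\theta_k} m_{k-1}$ (the analogue of \eqref{eq:wk-xk}) before taking differences, while the paper expands $v_{k+1}-v_k$ directly from the definition.
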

\begin{proof}[Proof of Lemma \ref{lem:v-sgd}]
For $k=1$, we have $m_1=(1-\beta_1)g_1$ since $m_0=0$, and hence $x_2 - x_1 = - \gamma_1 g_1-(1-\beta_1)\eta_1 g_1$ from the second equation in \eqref{alg:g-sgdm}, which further implies that
\begin{align*}
v_2-v_1 & = \frac{1}{\theta_2}(x_2-(1-\theta_2) x_1+(1-\theta_2) \gamma_1 g_1)- x_1 = \frac{1}{\theta_2}(x_2- x_1+(1-\theta_2) \gamma_1 g_1) \\
& =\frac{1}{\theta_2}(-\gamma_1 g_1-(1-\beta_1)\eta_1 g_1+(1-\theta_2) \gamma_1 g_1)=-\Big(\gamma_1+\frac{1-\beta_1}{\theta_2}\eta_1\Big)g_1.
\end{align*}
Hence \eqref{eq:v-sgd} holds with $k=1$.
For $k\ge 2$, it follows from \eqref{def:vk} and the update rules of \eqref{alg:g-sgdm} that
\begin{align*}
v_{k+1}-v_k =& \frac{1}{\theta_{k+1}}\big(x_{k+1}-(1-\theta_{k+1}) x_k+(1-\theta_{k+1}) \gamma_k g_k\big)-\frac{1}{\theta_k}\big(x_k-(1-\theta_k) x_{k-1}+(1-\theta_k) \gamma_{k-1} g_{k-1}\big)\\
= & \frac{1}{\theta_{k+1}}\big(x_{k+1}- x_k+(1-\theta_{k+1}) \gamma_k g_k\big)-\frac{1}{\theta_k}\big(x_k-x_{k-1}+(1-\theta_k) \gamma_{k-1} g_{k-1}\big)+x_k - x_{k-1}\\
= & \frac{1}{\theta_{k+1}}\big(-\theta_{k+1} \gamma_k g_k - \eta_k m_k\big)-\frac{1}{\theta_k}\big(-\theta_k \gamma_{k-1} g_{k-1} - \eta_{k-1} m_{k-1}\big) - \gamma_{k-1}g_{k-1} - \eta_{k-1} m_{k-1} \\
 = & - \gamma_k g_k - \frac{\eta_k}{\theta_{k+1}} \Big(m_k - \frac{\theta_{k+1}}{\eta_k}\Big(\frac{1}{\theta_k}-1\Big)\eta_{k-1} m_{k-1}\Big)\\
 = & - \gamma_k g_k - \frac{\eta_k}{\theta_{k+1}} (m_k - \beta_k m_{k-1})\\
 = & - \gamma_k g_k - \frac{\eta_k}{\theta_{k+1}}(1-\beta_k)g_k,
\end{align*}
where we apply \eqref{eq:cvx-cond-beta} to obtain the second last equality. Therefore, \eqref{eq:v-sgd} holds for all $k\ge 1$. 

We proceed to prove \eqref{eq:v-x}. Note it holds trivially for $k=1$ since $v_1 = y_1 = x_1$, hence we consider $k\ge 2$ in what follows. From the definitions of $v_k$ and $y_k$, together with the update rules of \eqref{alg:g-sgdm}, we derive
\begin{align*}
    v_k-x_k = & \frac{1}{\theta_k}\big(x_k-(1-\theta_k) x_{k-1}+(1-\theta_k) \gamma_{k-1} g_{k-1}\big) - x_k\\
    = & \Big(\frac{1}{\theta_k}-1\Big)x_k-\frac{1-\theta_k}{\theta_k}\big(x_{k-1} - \gamma_{k-1} g_{k-1}\big)= \Big(\frac{1}{\theta_k}-1\Big)(x_k - y_k).
\end{align*}
The proof is now complete.
\end{proof}

Lemma \ref{lem:v-sgd} indicates that the ``effective" step sizes are $\{\gamma_k+\frac{1-\beta_k}{\theta_{k+1}}\eta_k\}_{k\ge 1}$, which should be positive to ensure the convergence of \eqref{alg:g-sgdm}.
In the upcoming theorem, we present the refined convergence result of \eqref{alg:g-sgdm} at the intermediate point $y_t$. To establish this result, we introduce a sequence of Lyapunov functions $\{\Phi_k\}_{k\ge 1}$ defined by
\begin{equation}\label{def:Phi_k}
    \Phi_k=\frac{1}{\Theta_{k-1}}(f(y_k)-f^*)+p_k\|v_k-x^*\|^2,
\end{equation}
where $\{p_k\}_{k\ge 1}$ is a positive and non-increasing sequence to be specified in the proof, and $\{\Theta_k\}_{k\ge 0}$ is defined recursively via
\begin{equation}\label{def:Theta}
    \Theta_0=1,~\text{and}~\Theta_k := (1-\theta_k)\Theta_{k-1},~\forall k\ge 1.
\end{equation}
We shall show that if the sequence $\{p_k\}_{k\ge 1}$ is chosen appropriately, then the differences between consecutive Lyapunov functions can be bounded from above in expectation by terms proportional to $\sigma^2$, which is the key to our proof. 
\begin{theorem}\label{thm:cvx-vary}
Let Assumption \ref{ass:s-bv} hold, and the objective function $f$ in \eqref{eq:main-prob} be convex. Let $\{x_k\}_{k\ge 1}$ be the sequence generated by \eqref{alg:g-sgdm}. Suppose the parameters $\{\beta_k, \gamma_k, \eta_k\}_{k\ge 1}$ are chosen such that \eqref{eq:cvx-cond-beta} and the following conditions hold for all $k\ge 1$:
\begin{align}
    &\gamma_k+\frac{1-\beta_k}{\theta_{k+1}}\eta_k> 0,\label{eq:cvx-lr-0}\\
    &2-L \gamma_k-\frac{\theta_k(1-\beta_k)\eta_k}{\theta_{k+1}\gamma_k}-\theta_k\ge 0,\label{eq:cvx-lr-1}\\
    &\frac{\theta_k}{\theta_{k+1}\gamma_k+(1-\beta_k)\eta_k} \ge \frac{\theta_{k+2}}{(1-\theta_{k+1})(\theta_{k+2}\gamma_{k+1}+(1-\beta_{k+1})\eta_{k+1})}.\label{eq:cvx-lr-2}
\end{align}
Then, for any $t\ge 1$, it holds that
\begin{equation}\label{eq:rate-general}
    \ebb[f(y_t)]-f^*\leq \Phi_1\Theta_{t-1}+\sigma^2\sum_{k=1}^{t-1}\frac{\Theta_{t-1}}{2\Theta_k}\Big(L \gamma_k^2+\theta_k\Big(\gamma_k+\frac{1-\beta_k}{\theta_{k+1}}\eta_k\Big)\Big),
\end{equation}
where $y_t$ is defined in \eqref{def:yk} and 
\begin{equation}\label{def:Phi_1}
    \Phi_1:=f(x_1)-f^*+\frac{\theta_1\theta_2}{2(1-\theta_1)(\theta_2\gamma_1+(1-\beta_1)\eta_1)}\|x_1-x^*\|^2.
\end{equation}
\end{theorem}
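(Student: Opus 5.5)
The plan is to show a one-step Lyapunov inequality of the form
\[
\ebb[\Phi_{k+1}] \le \ebb[\Phi_k] + \frac{\sigma^2}{2\Theta_k}\Big(L\gamma_k^2+\theta_k\lambda_k\Big),\qquad \lambda_k:=\gamma_k+\frac{1-\beta_k}{\theta_{k+1}}\eta_k,
\]
for a suitable choice of $p_k$. We will then telescope from $k=1$ to $t-1$ and multiply by $\Theta_{t-1}$. Since $p_t\|v_t-x^*\|^2\ge 0$, this gives $\ebb[f(y_t)]-f^*\le \Theta_{t-1}\ebb[\Phi_t]$, which is \eqref{eq:rate-general}. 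The natural choice, dictated by the form of $\Phi_1$ and by \eqref{eq:cvx-lr-2}, is
\[
p_k=\frac{\theta_k}{2\Theta_k\lambda_k}=\frac{\theta_k\theta_{k+1}}{2\Theta_k(\theta_{k+1}\gamma_k+(1-\beta_k)\eta_k)}.
\]
With $\Theta_1=1-\theta_1$, $\Theta_0=1$ and $y_1=v_1=x_1$, this makes $\Phi_1$ agree with \eqref{def:Phi_1}. Dividing \eqref{eq:cvx-lr-2} by $2\Theta_k$ turns it exactly into $p_k\ge p_{k+1}$, so the sequence is non-increasing as required.

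The one-step bound has three steps.

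\textbf{Descent step.} Since $y_{k+1}=x_k-\gamma_k g_k$, $L$-smoothness gives
\[
f(y_{k+1})\le f(x_k)-\gamma_k\langle\nabla f(x_k),g_k\rangle+\frac{L\gamma_k^2}{2}\|g_k\|^2 .
\]
Taking $\ebb_k$ and using \eqref{eq:g-bound} yields
\[
\ebb_k f(y_{k+1})\le f(x_k)-\gamma_k\Big(1-\frac{L\gamma_k}{2}\Big)\|\nabla f(x_k)\|^2+\frac{L\gamma_k^2}{2}\sigma^2 .
\]

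\textbf{Distance step.} By \eqref{eq:v-sgd},
\[
\|v_{k+1}-x^*\|^2=\|v_k-x^*\|^2-2\lambda_k\langle g_k,v_k-x^*\rangle+\lambda_k^2\|g_k\|^2 .
\]
Split $v_k-x^*=(x_k-x^*)+(\tfrac1{\theta_k}-1)(x_k-y_k)$ using \eqref{eq:v-x}, and use convexity in both terms:
\[
\langle\nabla f(x_k),x_k-x^*\rangle\ge f(x_k)-f^*,\qquad \langle\nabla f(x_k),x_k-y_k\rangle\ge f(x_k)-f(y_k).
\]
This gives
\[
\ebb_k\langle g_k,v_k-x^*\rangle\ge \frac{1}{\theta_k}\big(f(x_k)-f^*\big)-\Big(\frac1{\theta_k}-1\Big)\big(f(y_k)-f^*\big).
\]
Multiplying by $p_k\cdot 2\lambda_k=\theta_k/\Theta_k$ produces the term $-\tfrac{1}{\Theta_k}(f(x_k)-f^*)$. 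It also produces $+\tfrac{1-\theta_k}{\Theta_k}(f(y_k)-f^*)=\tfrac{1}{\Theta_{k-1}}(f(y_k)-f^*)$, which exactly cancels the $f(y_k)$ part of $\Phi_k$. The quadratic term contributes
\[
p_k\lambda_k^2\,\ebb_k\|g_k\|^2\le \frac{\theta_k\lambda_k}{2\Theta_k}\big(\|\nabla f(x_k)\|^2+\sigma^2\big).
\]

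\textbf{Combining.} Add $\tfrac1{\Theta_k}\times$(descent) to $p_k\times$(distance), and use $p_{k+1}\le p_k$ to replace $p_{k+1}\|v_{k+1}-x^*\|^2$ by $p_k\|v_{k+1}-x^*\|^2$. The $f(x_k)$ terms cancel. The leftover gradient term is
\[
-\frac{1}{2\Theta_k}\Big(\gamma_k(2-L\gamma_k)-\theta_k\lambda_k\Big)\|\nabla f(x_k)\|^2 .
\]
Dividing its coefficient by $\gamma_k$ gives $2-L\gamma_k-\theta_k-\frac{\theta_k(1-\beta_k)\eta_k}{\theta_{k+1}\gamma_k}$, which is nonnegative by \eqref{eq:cvx-lr-1}. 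So the gradient term can be dropped, and the $\sigma^2$ terms match the claimed sum. Condition \eqref{eq:cvx-lr-0} guarantees $\lambda_k>0$, hence $p_k>0$.

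The main obstacle is the bookkeeping in the combining step, in particular the inner-product split via \eqref{eq:v-x}. The terms $f(y_k)$, $f(x_k)$ and $f^*$ must cancel exactly using $\Theta_k=(1-\theta_k)\Theta_{k-1}$. The case $k=1$ also needs a separate check: there $y_1=v_1=x_1$ and \eqref{eq:v-x} is trivial, so the same computation goes through with $\Theta_0=1$.
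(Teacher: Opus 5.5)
Your proposal is correct and follows the paper's proof essentially step for step: the same Lyapunov function with $p_k=\theta_k/(2\Theta_k\lambda_k)$, monotonicity of $p_k$ from \eqref{eq:cvx-lr-2}, the smoothness bound on $f(y_{k+1})$, the two convexity inequalities at $x_k$, and \eqref{eq:cvx-lr-1} to drop the gradient term (the only difference is bookkeeping: you apply convexity after splitting $v_k-x^*$ via \eqref{eq:v-x}, while the paper applies it first and then shows the combined inner product vanishes identically). One trivial slip: \eqref{eq:cvx-lr-2} becomes $p_k\ge p_{k+1}$ after multiplying by $\theta_{k+1}/(2\Theta_k)$, not merely dividing by $2\Theta_k$, which is harmless since $\theta_{k+1}>0$.
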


\begin{proof}[Proof of Theorem \ref{thm:cvx-vary}]
For notational convenience, denote the effective step sizes as
\begin{equation}\label{def:tilde_gamma}
    \tilde{\gamma}_k:=\gamma_k+\frac{1-\beta_k}{\theta_{k+1}}\eta_k>0,
\end{equation}
where the positivity is guaranteed by \eqref{eq:cvx-lr-0}. Then it follows from \eqref{eq:v-sgd} in Lemma \ref{lem:v-sgd} that
\begin{equation}\label{eq:diff-v}
    v_{k+1}=v_k- \tilde{\gamma}_k g_k.
\end{equation}
Consider the Lyapunov functions $\{\Phi_k\}_{k\ge 1}$ given by \eqref{def:Phi_k} with 
\begin{equation}\label{def:pk}
   p_k:=\frac{\theta_k}{2\tilde{\gamma}_k\Theta_k}>0.
\end{equation}
Then we know from \eqref{def:tilde_gamma} and the condition \eqref{eq:cvx-lr-2} that
\begin{align*}
    p_{k+1}=&\frac{\theta_{k+1}}{2\tilde{\gamma}_{k+1}\Theta_{k+1}}=\frac{\theta_{k+1}}{2(1-\theta_{k+1})(\gamma_{k+1}+\frac{1-\beta_{k+1}}{\theta_{k+2}}\eta_{k+1})\Theta_k}\\
    =&\frac{\theta_{k+1}}{2\Theta_k}\frac{\theta_{k+2}}{(1-\theta_{k+1})(\theta_{k+2}\gamma_{k+1}+(1-\beta_{k+1})\eta_{k+1})}\\
    \leq & \frac{\theta_{k+1}}{2\Theta_k}\frac{\theta_k}{\theta_{k+1}\gamma_k+(1-\beta_k)\eta_k} = \frac{\theta_k}{2\tilde{\gamma}_k\Theta_k}=p_k,
\end{align*}
which implies that $\{p_k\}_{k\ge 1}$ is non-increasing. Denote $\Delta \Phi_k:=\Phi_{k+1}-\Phi_k$, then it follows that
\begin{equation}\label{eq:diff-phi-1}
\Delta \Phi_k \leq \frac{1}{\Theta_{k-1}}(f(y_{k+1})-f(y_k))+\Big(\frac{1}{\Theta_k}-\frac{1}{\Theta_{k-1}}\Big)(f(y_{k+1})-f^*)+p_k\big(\|v_{k+1}-x^*\|^2-\|v_k-x^*\|^2\big).
\end{equation}
To bound the last term on the RHS of \eqref{eq:diff-phi-1}, we derive from \eqref{eq:diff-v} that 
\begin{equation}\label{eq:diff-v-1}
\|v_{k+1}-x^*\|^2-\|v_k-x^*\|^2=\|v_{k+1}-v_k\|^2+2\langle v_{k+1}-v_k, v_k-x^*\rangle = \tilde{\gamma}_k^2\|g_k\|^2+2 \tilde{\gamma}_k\langle g_k, x^*-v_k\rangle.
\end{equation}
From the $L$-smoothness of $f$ and \eqref{def:yk}, we know that
\begin{equation}\label{eq:f_yt}
\begin{aligned}
f(y_{k+1}) & \leq f(x_k)+\langle\nabla f(x_k), y_{k+1}-x_k\rangle+\frac{L}{2}\|y_{k+1}-x_k\|^2 \\
& =f(x_k)-\gamma_k\langle\nabla f(x_k), g_k\rangle+\frac{L \gamma_k^2}{2}\|g_k\|^2.
\end{aligned}
\end{equation}
Plugging \eqref{eq:diff-v-1} and \eqref{eq:f_yt} into \eqref{eq:diff-phi-1}, together with the convexity of $f$ gives
\begin{equation}\label{eq:diff-phi-2}
\begin{aligned}
\Delta \Phi_k \leq & \frac{1}{\Theta_{k-1}}(f(x_k)-f(y_k))+\Big(\frac{1}{\Theta_k}-\frac{1}{\Theta_{k-1}}\Big)(f(x_k)-f^*)-\frac{\gamma_k}{\Theta_k}\langle\nabla f(x_k), g_k\rangle\\
&+\Big(\frac{L \gamma_k^2}{2 \Theta_k}+p_k \tilde{\gamma}_k^2\Big)\|g_k\|^2 +2 p_k \tilde{\gamma}_k\langle g_k, x^*-v_k\rangle \\
\leq & \frac{1}{\Theta_{k-1}}\langle\nabla f(x_k), x_k-y_k\rangle+\Big(\frac{1}{\Theta_k}-\frac{1}{\Theta_{k-1}}\Big)\langle\nabla f(x_k), x_k-x^*\rangle -\frac{\gamma_k}{\Theta_k}\langle\nabla f(x_k), g_k\rangle\\
& +\Big(\frac{L \gamma_k^2}{2 \Theta_k}+p_k \tilde{\gamma}_k^2\Big)\|g_k\|^2+2 p_k \tilde{\gamma}_k\langle g_k, x^*-v_k\rangle.
\end{aligned}
\end{equation}
Note from Assumption \ref{ass:s-bv} that
\begin{align*}
    &\ebb[\langle g_k, x^*-v_k\rangle]=\ebb[\langle \nabla f(x_k), x^*-v_k\rangle],\\
    &\ebb[\langle\nabla f(x_k), g_k\rangle] = \ebb[\|\nabla f(x_k)\|^2],\\
    &\ebb[\|g_k\|^2]\leq \ebb[\|\nabla f(x_k)\|^2] + \sigma^2.
\end{align*}
Therefore, taking expectations on both sides of \eqref{eq:diff-phi-2} further leads to
\begin{multline}\label{eq:diff-phi-3}
\ebb[\Delta\Phi_k] \leq \ebb\Big[\Big\langle\nabla f(x_k), \frac{1}{\Theta_k} x_k-\frac{1}{\Theta_{k-1}} y_k-2 p_k \tilde{\gamma}_k v_k+\Big(2 p_k \tilde{\gamma}_k-\frac{1}{\Theta_k}+\frac{1}{\Theta_{k-1}}\Big) x^*\Big\rangle\Big] \\
-\Big(\frac{\gamma_k}{\Theta_k}-\frac{L \gamma_k^2}{2 \Theta_k}-p_k \tilde{\gamma}_k^2\Big) \ebb[\|\nabla f(x_k)\|^2]+\Big(\frac{L \gamma_k^2}{2 \Theta_k}+p_k \tilde{\gamma}_k^2\Big) \sigma^2.
\end{multline}

To proceed, note from the definitions of $p_k$ \eqref{def:pk} and $\Theta_k$ \eqref{def:Theta} that
\begin{equation}\label{eq:pt=0}
    2 p_k \tilde{\gamma}_k -\frac{1}{\Theta_k}+\frac{1}{\Theta_{k-1}} = \frac{\theta_k}{\Theta_k}-\frac{1}{\Theta_k}+\frac{1}{\Theta_{k-1}} = \frac{\theta_k-1}{(1-\theta_k)\Theta_{k-1}}+\frac{1}{\Theta_{k-1}}=0.
\end{equation}
Moreover, we obtain from \eqref{eq:v-x} and \eqref{def:Theta} that
\begin{equation}\label{eq:x-y-v}
\begin{aligned}
   \frac{1}{\Theta_k} x_k - \frac{1}{\Theta_{k-1}} y_k-2 p_k \tilde{\gamma}_k v_k = & \frac{1}{\Theta_k} x_k - \frac{1}{\Theta_{k-1}} y_k- \frac{\theta_k}{\Theta_k}v_k\\
   =&\frac{1}{\Theta_k}(x_k-(1-\theta_k)y_k-\theta_k v_k)\\
   =&\frac{1}{\Theta_k}\Big(x_k-(1-\theta_k)y_k-\theta_k (x_k+(1/\theta_k-1)(x_k-y_k))\Big) = 0,
\end{aligned}
\end{equation}
which together with \eqref{eq:pt=0} indicates that the inner product term in \eqref{eq:diff-phi-3} vanishes. Moreover, it follows from \eqref{def:tilde_gamma}, condition \eqref{eq:cvx-lr-1}, and the definition of $p_k$ \eqref{def:pk} that
\begin{equation}\label{eq:term-2}
\begin{aligned}
    \frac{\gamma_k}{\Theta_k}-\frac{L \gamma_k^2}{2 \Theta_k}-p_k \tilde{\gamma}_k^2 &= \frac{\gamma_k}{\Theta_k}-\frac{L \gamma_k^2}{2\Theta_k}-\frac{\theta_k}{2\Theta_k}\Big(\gamma_k+\frac{1-\beta_k}{\theta_{k+1}}\eta_k\Big) \\
    &= \frac{\gamma_k}{2\Theta_k}\Big(2-L \gamma_k-\frac{\theta_k(1-\beta_k)\eta_k}{\theta_{k+1}\gamma_k}-\theta_k\Big)\ge 0.
\end{aligned}
\end{equation}
Combining \eqref{eq:pt=0}, \eqref{eq:x-y-v}, and \eqref{eq:term-2}, we obtain from \eqref{eq:diff-phi-3} that
\[
    \ebb[\Delta\Phi_k] \leq \Big(\frac{L \gamma_k^2}{2 \Theta_k}+p_k \tilde{\gamma}_k^2\Big) \sigma^2 = \Big(\frac{L \gamma_k^2}{2 \Theta_k}+\frac{\theta_k}{2\Theta_k}\Big(\gamma_k+\frac{1-\beta_k}{\theta_{k+1}}\eta_k\Big)\Big)\sigma^2 = \frac{1}{2\Theta_k}\Big(L \gamma_k^2+\theta_k\Big(\gamma_k+\frac{1-\beta_k}{\theta_{k+1}}\eta_k\Big)\Big)\sigma^2,
\]
which further gives
\begin{align*}
    \frac{1}{\Theta_{t-1}}\ebb[f(y_t)-f^*]&\leq\ebb[\Phi_t] = \Phi_1+\sum_{k=1}^{t-1}\ebb[\Delta\Phi_k]\\
    &\leq \Phi_1+\sigma^2\sum_{k=1}^{t-1}\frac{1}{2\Theta_k}\Big(L \gamma_k^2+\theta_k\Big(\gamma_k+\frac{1-\beta_k}{\theta_{k+1}}\eta_k\Big)\Big).
\end{align*}
Moreover, we know from $\Theta_0=1$ and \eqref{def:pk} that
\[
p_1:=\frac{\theta_1}{2\tilde{\gamma}_1\Theta_1}=\frac{\theta_1}{2(1-\theta_1)(\gamma_1+\frac{1-\beta_1}{\theta_2}\eta_1)}=\frac{\theta_1\theta_2}{2(1-\theta_1)(\theta_2\gamma_1+(1-\beta_1)\eta_1)},
\]
which together with $v_1=y_1=x_1$ implies \eqref{def:Phi_1}. The proof is now complete.
\end{proof}

While Theorem \ref{thm:cvx-vary} provides general convergence bounds for \eqref{alg:g-sgdm} with time-varying parameters satisfying conditions \eqref{eq:cvx-cond-beta} and \eqref{eq:cvx-lr-0}-\eqref{eq:cvx-lr-2}, it does not yet reveal explicit convergence rates. To obtain concrete results, we next specify the parameters $\{\beta_k, \gamma_k, \eta_k\}_{k\ge 1}$, for which all the required conditions hold automatically with a particular choice of $\{\theta_k\}_{k\ge 1}$. This choice allows the bound in Theorem \ref{thm:cvx-vary} to be evaluated in closed form, leading to the optimal $O(1/t^2)$ convergence rate in the deterministic setting and an $O(1/t^2+\sigma/\sqrt{t})$ rate in the stochastic setting, as stated in the following corollary. 
\begin{corollary}\label{cor:cvx-vary}
    Consider the same setting as in Theorem \ref{thm:cvx-vary}. Let $\gamma_k\equiv\gamma\in (0,1/L]$. Given an arbitrary $\beta_1\in [0,1)$, define $\eta_1:=\frac{L\gamma^2-\gamma}{2(1-\beta_1)}$. The parameters $\{\eta_k\}_{k\ge 1}$ and $\{\beta_k\}_{k\ge 1}$ are then generated recursively via
    \begin{align}
    &\eta_k=\frac{k}{k+3} \eta_{k-1}+\frac{(k+1) L \gamma^2-2 \gamma}{k+3},~\forall k\ge 2,\label{eq:eta-gen}\\
    &\beta_k=\frac{k\eta_{k-1}}{(k+3)\eta_k},~\forall k\ge 2.\label{eq:beta-gen}
    \end{align}
    Then, for any $t\ge 1$, the following results hold.
    \begin{itemize}
        \item[(i).] In the deterministic setting with $\sigma^2=0$, choosing $\gamma=1/L$ yields
        \[
            \ebb[f(y_t)]-f^*\leq \frac{2(f(x_1)-f^*+L\|x_1-x^*\|^2)}{t(t+1)}.
        \]
        \item[(ii).] In the stochastic setting with $\sigma^2>0$, let $C>0$ and choose
        \begin{equation}\label{eq:gamma-gen}
            \gamma=\min\Big\{\frac{1}{L},~t^{-3/4}\sqrt{\frac{C}{\sigma L}}\Big\}.
        \end{equation}
        Then we have
        \[
        \ebb[f(y_t)]-f^*\leq \frac{2(f(x_1)-f^*+L\|x_1-x^*\|^2)}{t(t+1)} + \frac{\sigma}{\sqrt{t}}\Big(\frac{\|x_1-x^*\|^2}{C}+\frac{11C}{3}\Big).
        \]
    \end{itemize}
\end{corollary}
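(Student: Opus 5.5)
The plan is to apply Theorem \ref{thm:cvx-vary} with the explicit choice
\[
\theta_k=\frac{2}{k+2},\qquad\text{so that}\qquad \Theta_k=\prod_{j=1}^k\frac{j}{j+2}=\frac{2}{(k+1)(k+2)},\quad \Theta_{t-1}=\frac{2}{t(t+1)}.
\]
The first step is to check \eqref{eq:cvx-cond-beta}. We have $\theta_{k+1}(1/\theta_k-1)=\frac{2}{k+3}\cdot\frac{k}{2}=\frac{k}{k+3}$, and this equals $\beta_k\eta_k/\eta_{k-1}$ by the definition \eqref{eq:beta-gen}.

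Next I compute the effective step size $\tilde\gamma_k=\gamma+\frac{(1-\beta_k)\eta_k}{\theta_{k+1}}$. For $k\ge2$, \eqref{eq:beta-gen} and \eqref{eq:eta-gen} give
\[
(1-\beta_k)\eta_k=\eta_k-\frac{k}{k+3}\eta_{k-1}=\frac{(k+1)L\gamma^2-2\gamma}{k+3}.
\]
Hence $\tilde\gamma_k=\gamma+\frac{(k+1)L\gamma^2-2\gamma}{2}=\frac{(k+1)L\gamma^2}{2}$. For $k=1$, the choice of $\eta_1$ together with $\theta_2=1/2$ gives $\tilde\gamma_1=L\gamma^2$, which is the same formula. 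So \eqref{eq:cvx-lr-0} holds.

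Since $\tilde\gamma_k=\gamma+(1-\beta_k)\eta_k/\theta_{k+1}$, condition \eqref{eq:cvx-lr-1} can be rewritten as $2-L\gamma-\theta_k\tilde\gamma_k/\gamma\ge0$. Here $\theta_k\tilde\gamma_k/\gamma=\frac{k+1}{k+2}L\gamma\le L\gamma$, so it suffices that $2-2L\gamma\ge0$, which holds because $\gamma\le1/L$.

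Condition \eqref{eq:cvx-lr-2} reads $\frac{\theta_k}{\theta_{k+1}\tilde\gamma_k}\ge\frac{1}{(1-\theta_{k+1})\tilde\gamma_{k+1}}$. Substituting the formulas, both sides equal $\frac{2(k+3)}{(k+1)(k+2)L\gamma^2}$, so it holds with equality. Consequently $p_k\equiv 1/(L\gamma^2)$ and $\Phi_1=f(x_1)-f^*+\|x_1-x^*\|^2/(L\gamma^2)$.

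I then evaluate \eqref{eq:rate-general}. The initial term is $\Phi_1\Theta_{t-1}=\frac{2}{t(t+1)}\big(f(x_1)-f^*+\frac{\|x_1-x^*\|^2}{L\gamma^2}\big)$. With $\gamma=1/L$ and $\sigma=0$ this gives (i) immediately.

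For the noise term, using $\theta_k\tilde\gamma_k=\frac{k+1}{k+2}L\gamma^2$, each summand is
\[
\frac{1}{2\Theta_k}\bigl(L\gamma^2+\theta_k\tilde\gamma_k\bigr)=\frac{L\gamma^2(k+1)(2k+3)}{4}.
\]
So the noise contribution is $\sigma^2L\gamma^2\frac{2}{t(t+1)}\sum_{k=1}^{t-1}\frac{(k+1)(2k+3)}{4}$. Summing the cubic-type expression gives a bound of the form $c\,\sigma^2L\gamma^2 t$, where I expect $c$ to be an explicit constant like $1/3$ plus lower-order corrections bounded using $t\ge1$.

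For (ii), I use $\gamma\le t^{-3/4}\sqrt{C/(\sigma L)}$, so $L\gamma^2\le Ct^{-3/2}/\sigma$. Then the noise term is at most $c'\,C\sigma/\sqrt t$.

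For the initial term, I use $1/\gamma^2=\max\{L^2,\,t^{3/2}\sigma L/C\}\le L^2+t^{3/2}\sigma L/C$. This gives
\[
\frac{2\|x_1-x^*\|^2}{L\gamma^2t(t+1)}\le\frac{2L\|x_1-x^*\|^2}{t(t+1)}+\frac{2t^{3/2}\sigma\|x_1-x^*\|^2}{Ct(t+1)},
\]
and the last term is $O(\sigma\|x_1-x^*\|^2/(C\sqrt t))$.

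The main obstacle is bookkeeping the constants: getting exactly $\|x_1-x^*\|^2/C$ and $11C/3$. This requires a careful bound such as $t^{3/2}\cdot 2/(t(t+1))\le 1/\sqrt t$ where valid. Alternatively, some of the excess from the $\|x_1-x^*\|^2$ term may need to be absorbed into the $11C/3$ constant. I would first compute $\sum_{k=1}^{t-1}(k+1)(2k+3)$ exactly and bound it by a multiple of $t(t+1)^2$, and then adjust the splitting to reach the stated constants.
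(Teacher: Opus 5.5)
Your route is the paper's own. The choice $\theta_k=2/(k+2)$, the effective step $\tilde\gamma_k=(k+1)L\gamma^2/2$, equality in \eqref{eq:cvx-lr-2} (so $p_k\equiv 1/(L\gamma^2)$), $\Theta_{t-1}=2/(t(t+1))$ and $\Phi_1=f(x_1)-f^*+\|x_1-x^*\|^2/(L\gamma^2)$ are all correct, and they give (i).

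The gap is the step you flagged in (ii), and no different splitting will close it. The initial-condition contribution is
\[
\Theta_{t-1}\,\frac{\sigma t^{3/2}}{C}\,\|x_1-x^*\|^2=\frac{2\sqrt{t}}{t+1}\cdot\frac{\sigma\|x_1-x^*\|^2}{C},
\]
and $\frac{2\sqrt t}{t+1}\le\frac{1}{\sqrt t}$ holds only at $t=1$; the ratio $\frac{2t}{t+1}$ tends to $2$. The step $\max\{L,\sigma t^{3/2}/C\}\le L+\sigma t^{3/2}/C$ is not where the loss occurs. When $\gamma=t^{-3/4}\sqrt{C/(\sigma L)}<1/L$ one has $1/(L\gamma^2)=\sigma t^{3/2}/C$ exactly. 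In that case the $\|x_1-x^*\|^2$-part of the bound exceeds the corresponding part of the claim by $\frac{\|x_1-x^*\|^2}{t(t+1)}\bigl(\frac{\sigma\sqrt t(t-1)}{C}-2L\bigr)$. This excess is linear in $\|x_1-x^*\|^2$, so whenever $\sigma\sqrt t(t-1)>2LC$ it cannot be absorbed into the $11C/3$ term, whose size does not depend on $\|x_1-x^*\|^2$. The paper's proof makes exactly this slip in its final display. So, with the stated $\gamma$, the coefficient $1$ in front of $\|x_1-x^*\|^2/C$ is established neither by your argument nor by the paper's.

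Here is what the argument does prove. Your exact sum $\sum_{k=1}^{t-1}(k+1)(2k+3)=\frac{t(t+1)(4t+5)}{6}-3$ bounds the noise term by $\frac{(4t+5)\sigma^2L\gamma^2}{12}\le\frac34\, t\sigma^2L\gamma^2\le\frac{3C\sigma}{4\sqrt t}$. Hence, with the stated $\gamma$, you get the claimed bound with $\frac{2\|x_1-x^*\|^2}{C}+\frac{3C}{4}$ in the bracket, or $+\frac{11C}{3}$ using the paper's cruder estimate.

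To recover the bracket $\frac{\|x_1-x^*\|^2}{C}+\frac{11C}{3}$ exactly, rescale the step size to $\gamma=\min\{1/L,\,t^{-3/4}\sqrt{2C/(\sigma L)}\}$. Then $1/(L\gamma^2)\le L+\sigma t^{3/2}/(2C)$ produces the term $\frac{\sigma\|x_1-x^*\|^2}{C\sqrt t}$, and the noise term is at most $\frac{3C\sigma}{2\sqrt t}\le\frac{11C\sigma}{3\sqrt t}$. Either way the fix is a factor-$2$ correction, not a rebalancing between the two terms.

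A further loose end, shared with the paper, concerns the range of $\beta_k$. For $\gamma<1/L$ one has $\eta_1<0$, while $\eta_k$ eventually becomes positive. So \eqref{eq:beta-gen} places $\beta_k$ outside $[0,1)$ near the sign change, and is undefined if some $\eta_k=0$. The inequalities survive, since Lemma \ref{lem:v-sgd} and Theorem \ref{thm:cvx-vary} never use $\beta_k\in[0,1)$. However, the resulting scheme is then not an instance of \eqref{alg:g-sgdm} as defined.
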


\begin{proof}[Proof of Corollary \ref{cor:cvx-vary}]
    Let $\theta_k:=\frac{2}{k+2}\in(0,1)$. Then it is straightforward to verify the condition \eqref{eq:cvx-cond-beta} by \eqref{eq:beta-gen} and observing that
    \[
    \theta_{k+1}\Big(\frac{1}{\theta_k}-1\Big)= \theta_{k+1}/\theta_k-\theta_{k+1} = \frac{k+2}{k+3} - \frac{2}{k+3} = \frac{k}{k+3}.
    \]
    Next, we show that the conditions \eqref{eq:cvx-lr-0}, \eqref{eq:cvx-lr-1}, and \eqref{eq:cvx-lr-2} are also satisfied with the given choices of $\{\theta_k\}_{k\ge 1}$ and parameters $\{\beta_k, \gamma_k, \eta_k\}_{k\ge 1}$. To this end, first note from \eqref{eq:eta-gen} and \eqref{eq:beta-gen} that for any $k\ge 2$:
    \begin{equation}\label{eq:t-square-1}
        (1-\beta_k)\eta_k = \eta_k - \beta_k \eta_k=\eta_k-\frac{k}{k+3} \eta_{k-1}= \frac{(k+1) L \gamma^2-2 \gamma}{k+3}.
    \end{equation}
    Since $\eta_1:=\frac{L\gamma^2-\gamma}{2(1-\beta_1)}$, it follows that \eqref{eq:t-square-1} also holds for $k=1$ and hence we have
    \begin{equation}\label{eq:tilde_gamma>0}
        \gamma+\frac{1-\beta_k}{\theta_{k+1}}\eta_k = \gamma + \frac{k+3}{2}\cdot\frac{(k+1) L \gamma^2-2 \gamma}{k+3} = \frac{(k+1) L \gamma^2}{2}>0,~\forall k\ge 1,
    \end{equation}
    which verifies the condition \eqref{eq:cvx-lr-0}. Moreover, we obtain from $\gamma\leq 1/L$ and \eqref{eq:t-square-1} that
    \begin{align*}
        2-L \gamma-\frac{\theta_k(1-\beta_k)\eta_k}{\theta_{k+1}\gamma}-\theta_k=& 2-L \gamma - \frac{k+3}{k+2}\cdot\frac{(k+1) L \gamma-2}{k+3} - \frac{2}{k+2}\\
        =& 2-L\gamma -\frac{(k+1) L \gamma}{k+2} \ge 2-2L\gamma\ge 0,~\forall k\ge 1.
    \end{align*}
    Hence, condition \eqref{eq:cvx-lr-1} is satisfied. Using \eqref{eq:t-square-1}, direct computation gives 
    \begin{align*}
        &\frac{\theta_k}{\theta_{k+1}\gamma+(1-\beta_k)\eta_k} = \frac{2/(k+2)}{2\gamma/(k+3)+[(k+1)L\gamma^2-2\gamma]/(k+3)}= \frac{2(k+3)}{(k+1)(k+2)L\gamma^2},\\
        & \frac{\theta_{k+2}}{(1-\theta_{k+1})(\theta_{k+2}\gamma+(1-\beta_{k+1})\eta_{k+1})} = \frac{2/(k+4)}{\frac{k+1}{k+3}\Big(\frac{2\gamma}{k+4}+\frac{(k+2)L\gamma^2-2\gamma}{k+4}\Big)}=\frac{2(k+3)}{(k+1)(k+2)L\gamma^2},
    \end{align*}
    which indicates that \eqref{eq:cvx-lr-2} holds with equalities. 

    We proceed to prove the desired convergence rates for both cases. With the given $\{\theta_k\}_{k\ge 1}$, it follows from \eqref{def:Theta} that
    \begin{equation}\label{eq:Theta}
        \Theta_k = \frac{2}{(k+1)(k+2)},~\forall k\ge 0.
    \end{equation}
    Additionally, plugging $\theta_1=2/3$, $\theta_2=1/2$, and $(1-\beta_1)\eta_1:=(L\gamma^2-\gamma)/2$ into \eqref{def:Phi_1} gives
    \begin{equation}\label{eq:Phi_1}
    \begin{aligned}
       \Phi_1=&f(x_1)-f^*+\frac{\theta_1\theta_2}{2(1-\theta_1)(\theta_2\gamma+(1-\beta_1)\eta_1)}\|x_1-x^*\|^2\\
       =&f(x_1)-f^*+\frac{1}{L\gamma^2}\|x_1-x^*\|^2.
    \end{aligned}
    \end{equation}
    Applying \eqref{eq:Theta} with $k=t-1$ and $\sigma^2=0$ on \eqref{eq:rate-general}, together with \eqref{eq:Phi_1} and $\gamma=1/L$ leads to
    \[
        \ebb[f(y_t)]-f^*\leq \Phi_1\Theta_{t-1} = \frac{2(f(x_1)-f^*+L\|x_1-x^*\|^2)}{t(t+1)},
    \]
    which proves the deterministic case (i).

    Next, we consider the stochastic case (ii) with $\sigma^2>0$. Note from \eqref{eq:tilde_gamma>0} that
    \begin{equation*}
        L \gamma^2+\theta_k\Big(\gamma+\frac{1-\beta_k}{\theta_{k+1}}\eta_k\Big)= L\gamma^2+\frac{2}{k+2}\cdot\frac{(k+1) L \gamma^2}{2} = L\gamma^2+\frac{k+1}{k+2}L\gamma^2\leq 2L\gamma^2,
    \end{equation*}
    which together with \eqref{eq:Theta} and the choice of $\gamma$ in \eqref{eq:gamma-gen} further implies (note $1\leq t\leq t^2$ for $t\ge 1$)
    \begin{equation}\label{eq:rate-stoc-1}
        \begin{aligned}
            &\sigma^2\sum_{k=1}^{t-1}\frac{\Theta_{t-1}}{2\Theta_k}\Big(L \gamma^2+\theta_k\Big(\gamma+\frac{1-\beta_k}{\theta_{k+1}}\eta_k\Big)\Big)\leq \frac{\sigma^2L \gamma^2}{t(t+1)} \sum_{k=1}^{t-1}(k+1)(k+2)\\
            =&\frac{\sigma^2 L \gamma^2}{t(t+1)} \frac{(t-1)(t^2+4 t+6)}{3}\leq\frac{\sigma^2 L \gamma^2(t^2+4 t+6)}{3 t}\leq\frac{11t\sigma^2 L \gamma^2}{3}\leq\frac{11C\sigma}{3\sqrt{t}}.
        \end{aligned}
    \end{equation}
    Moreover, applying \eqref{eq:gamma-gen} in \eqref{eq:Phi_1} gives
    \begin{equation}\label{eq:Phi_1-stoc}
    \begin{aligned}
        \Phi_1=&f(x_1)-f^*+\frac{1}{L\min\{1/L^2,~C t^{-3/2}/\sigma L\}}\|x_1-x^*\|^2\\
        =&f(x_1)-f^*+\max\{L,\sigma t^{3/2}/C\}\|x_1-x^*\|^2\\
        \leq&f(x_1)-f^*+(L+\sigma t^{3/2}/C)\|x_1-x^*\|^2.
    \end{aligned}
    \end{equation}
    Finally, plugging \eqref{eq:Theta} with $k=t-1$, \eqref{eq:rate-stoc-1}, and \eqref{eq:Phi_1-stoc} into \eqref{eq:rate-general}, we obtain
    \begin{align*}
        \ebb[f(y_t)]-f^*\leq \frac{2(f(x_1)-f^*+L\|x_1-x^*\|^2)}{t(t+1)} + \frac{\sigma\|x_1-x^*\|^2}{C\sqrt{t}}+\frac{11C\sigma}{3\sqrt{t}},
    \end{align*}
    which proves (ii). The proof is now complete.
\end{proof}

\section{Convergence for Nonconvex Problems}
\label{sec:convergence-nc}
In this section, we move beyond convex objectives and turn to the nonconvex setting, where global optimality can no longer be guaranteed. Nevertheless, we show that \eqref{alg:g-sgdm} still admits meaningful convergence guarantees to stationary points.
For brevity, we focus on \eqref{alg:g-sgdm} with constant parameters $\beta\in[0,1)$, $\gamma\ge0$, and $\eta>0$. 

To facilitate our analysis, we consider the auxiliary sequence $\{w_k\}_{k\ge1}$ defined in \eqref{def:wk}, which satisfies the SGD-like property Lemma \ref{lem:w-sgd}. Additionally, from the definition \eqref{def:wk}, the second equation of \eqref{alg:g-sgdm}, and the initialization $m_0=0$, it is straightforward to verify that
\begin{equation}\label{eq:wk-xk}
    w_k=x_k-\frac{\beta\eta}{1-\beta} m_{k-1},~\forall k\ge 1.
\end{equation}
Before presenting the main result, we first state the following lemma, which provides an upper bound on $\ebb[\|m_k\|^2]$ and will be useful to our analysis.
\begin{lemma}\label{lem:m-var}
Let Assumption \ref{ass:s-bv} hold. Let $\{x_k\}_{k\ge 1}$ and $\{m_k\}_{k\ge 0}$ be the sequences generated by \eqref{alg:g-sgdm} with constant momentum parameter $\beta\in [0,1)$. Then for any $k\ge 0$, it holds that
\[
\ebb[\|m_k\|^2] \leq 2(1-\beta) \sum_{j=1}^k \beta^{k-j}\ebb[\|\nabla f(x_j)\|^2]+\frac{2(1-\beta)}{1+\beta} \sigma^2.
\]
\end{lemma}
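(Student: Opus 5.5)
The plan is to unroll the momentum recursion and split the stochastic gradients into their mean and noise parts. Since $m_0=0$ and $\beta$ is constant, for every $k\ge 1$
\[
m_k=(1-\beta)\sum_{j=1}^k \beta^{k-j} g_j=(1-\beta)\sum_{j=1}^k \beta^{k-j}\nabla f(x_j)+(1-\beta)\sum_{j=1}^k \beta^{k-j}\epsilon_j,
\]
with $\epsilon_j:=g_j-\nabla f(x_j)$ as in the proof of Theorem \ref{thm:cvx-const}. The case $k=0$ is trivial because both sides are nonnegative and the left side is zero. Using $\|a+b\|^2\le 2\|a\|^2+2\|b\|^2$, it then suffices to bound the deterministic-drift part and the noise part separately.

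\textbf{Drift part.} Write $(1-\beta)\sum_{j}\beta^{k-j}\nabla f(x_j)$ as $\sum_j \lambda_j \cdot \nabla f(x_j)$ with weights $\lambda_j=\beta^{k-j}$. Their sum is $\Lambda:=\sum_{j=1}^k\beta^{k-j}=(1-\beta^k)/(1-\beta)\le 1/(1-\beta)$. By Jensen's inequality (equivalently Cauchy--Schwarz),
\[
\Big\|\sum_j \lambda_j \nabla f(x_j)\Big\|^2\le \Lambda\sum_j\lambda_j\|\nabla f(x_j)\|^2 .
\]
Therefore
\[
(1-\beta)^2\Big\|\sum_j\beta^{k-j}\nabla f(x_j)\Big\|^2\le (1-\beta)\sum_j\beta^{k-j}\|\nabla f(x_j)\|^2 .
\]
After the factor $2$ and taking expectations, this gives the first term of the lemma.

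\textbf{Noise part.} The $\epsilon_j$ form a martingale difference sequence: $\epsilon_j$ is $\mathcal F_j$-measurable and $\ebb_j[\epsilon_j]=0$. Hence for $i<j$ we have $\ebb[\langle\epsilon_i,\epsilon_j\rangle]=\ebb[\langle\epsilon_i,\ebb_j[\epsilon_j]\rangle]=0$, by the same tower-property argument used in \eqref{eq:cvx-const-3}. The cross terms therefore vanish, and
\[
\ebb\Big\|\sum_j\beta^{k-j}\epsilon_j\Big\|^2=\sum_j\beta^{2(k-j)}\ebb\|\epsilon_j\|^2\le \sigma^2\sum_{i\ge0}\beta^{2i}=\frac{\sigma^2}{1-\beta^2}.
\]
Multiplying by $2(1-\beta)^2$ gives $2(1-\beta)^2\sigma^2/((1-\beta)(1+\beta))=2(1-\beta)\sigma^2/(1+\beta)$, which matches the second term.

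No step is a real obstacle. The only point needing care is the orthogonality of the noise terms: each $x_j$ depends on earlier samples, so the argument must go through conditional expectations rather than an independence assumption. The geometric-weight Jensen step is routine.
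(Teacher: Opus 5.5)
Your proposal is correct and follows the paper's proof essentially step for step. Both unroll $m_k=(1-\beta)\sum_{j=1}^k\beta^{k-j}g_j$, split it with $\|a+b\|^2\le 2\|a\|^2+2\|b\|^2$, bound the drift part by Jensen using total weight at most $1/(1-\beta)$, and remove the noise cross terms by conditioning on $\mathcal{F}_{j-1}$, which gives the $\sigma^2/(1-\beta^2)$ bound.
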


\begin{proof}[Proof of Lemma \ref{lem:m-var}]
    The desired result holds trivially for $k=0$ since $m_0=0$. Hence we assume $k\ge 1$ in what follows. Applying the first equation in \eqref{alg:g-sgdm} recursively and noting that $m_0=0$, we obtain
    \[
    m_k = (1-\beta) \sum_{j=1}^k \beta^{k-j} g_j, 
    \]
    which together with Young's inequality further gives
    \begin{equation}\label{eq:m-bd-1}
    \begin{aligned}
        \|m_k\|^2 \leq& 2\Big\|(1-\beta) \sum_{j=1}^k \beta^{k-j} \nabla f(x_j)\Big\|^2+2\Big\|m_k-(1-\beta) \sum_{j=1}^k \beta^{k-j}\nabla f(x_j))\Big\|^2\\
        =&2(1-\beta)^2\Big\|\sum_{j=1}^k \beta^{k-j} \nabla f(x_j)\Big\|^2+2(1-\beta)^2 \Big\|\sum_{j=1}^k \beta^{k-j}(g_j-\nabla f(x_j))\Big\|^2.
    \end{aligned}
    \end{equation}
    To bound the first term on the RHS of \eqref{eq:m-bd-1}, we use the convexity of $\|\cdot\|^2$ to derive
    \begin{equation}\label{eq:m-bd-term1}
    \begin{aligned}
        \Big\|\sum_{j=1}^k \beta^{k-j} \nabla f(x_j)\Big\|^2=&\Big\|\Big(\sum_{j=1}^k \beta^{k-j}\Big)\sum_{j=1}^k \frac{\beta^{k-j}}{\sum_{j=1}^k \beta^{k-j}} \nabla f(x_j)\Big\|^2\\
        \leq&\Big(\sum_{j=1}^k \beta^{k-j}\Big)^2\sum_{j=1}^k \frac{\beta^{k-j}}{\sum_{j=1}^k \beta^{k-j}} \|\nabla f(x_j)\|^2\\
        \leq&\frac{1}{1-\beta}\sum_{j=1}^k \beta^{k-j} \|\nabla f(x_j)\|^2,
    \end{aligned}
    \end{equation}
    where the last inequality holds due to $\sum_{j=1}^k \beta^{k-j}=\sum_{j=0}^{k-1} \beta^j=\frac{1-\beta^k}{1-\beta}\leq \frac{1}{1-\beta}$. For the second term in \eqref{eq:m-bd-1}, we expand the square of norm to obtain
    \begin{equation}\label{eq:m-bd-2}
        \Big\|\sum_{j=1}^k \beta^{k-j}(g_j-\nabla f(x_j))\Big\|^2=\sum_{i=1}^k \sum_{j=1}^k\left\langle\beta^{k-i}\left(g_i-\nabla f(x_i)\right), \beta^{k-j}\left(g_j-\nabla f(x_j)\right)\right\rangle.
    \end{equation}
    For $i<j$, since $\ebb_i[g_i]=\nabla f(x_i)$ from Assumption \ref{ass:s-bv}, it follows that
    \begin{equation}\label{eq:m-bd-3}
        \ebb_j[\left\langle g_i-\nabla f(x_i), g_j-\nabla f(x_j)\right\rangle] = \left\langle g_i-\nabla f(x_i), \ebb_j[g_j-\nabla f(x_j)]\right\rangle = 0.
    \end{equation}
    Due to symmetry, we know that \eqref{eq:m-bd-3} also holds for $i>j$. Taking expectations on both sides of \eqref{eq:m-bd-2} and noting \eqref{eq:m-bd-3} for $i\neq j$, we obtain from Assumption \ref{ass:s-bv} that
    \begin{equation}\label{eq:m-bd-4}
        \ebb\Big[\Big\|\sum_{j=1}^k \beta^{k-j}(g_j-\nabla f(x_j))\Big\|^2\Big] = \sum_{j=1}^k\beta^{2(k-j)}\ebb[\|g_j-\nabla f(x_j)\|^2]\leq \sigma^2\sum_{j=1}^k\beta^{2(k-j)}\leq\frac{\sigma^2}{1-\beta^2},
    \end{equation}
    where the last inequality follows from $\sum_{j=1}^k\beta^{2(k-j)}=\sum_{j=0}^{k-1}\beta^{2j}=\frac{1-\beta^{2k}}{1-\beta^2}\leq\frac{1}{1-\beta^2}$. Therefore, taking expectations on both sides of \eqref{eq:m-bd-1} and plugging in \eqref{eq:m-bd-term1} and \eqref{eq:m-bd-4} leads to the desired result, which completes the proof.
\end{proof}

Leveraging Lemma \ref{lem:m-var}, we now derive the main convergence result for nonconvex objectives. The following theorem establishes that the iterates generated by \eqref{alg:g-sgdm} converge to stationary points at a sublinear rate in expectation.
\begin{theorem}\label{thm:nc}
    Let Assumption \ref{ass:s-bv} hold. Let $\{x_k\}_{k\ge 1}$ be the sequence generated by \eqref{alg:g-sgdm} with constant parameters $\beta\in [0,1)$, $\gamma\ge 0$, and $\eta>0$. If we choose
\[
0<\gamma+\eta \leq \frac{1-\beta}{3 L},
\]
then for any $t\ge 1$, it holds that
\[
    \frac{1}{t}\sum_{k=1}^t \ebb[\|\nabla f(x_k)\|^2]\leq \frac{2(f(x_1)-f^*)}{(\gamma+\eta)t}+\Big(\frac{\beta^2 L \eta}{1+\beta}+L(\gamma+\eta)\Big)\sigma^2.
\]
\end{theorem}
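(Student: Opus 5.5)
We run a descent argument on the auxiliary sequence $\{w_k\}$ from \eqref{def:wk}, using $f$ itself as the Lyapunov function. By Lemma~\ref{lem:w-sgd}, $w_{k+1}=w_k-(\gamma+\eta)g_k$. The $L$-smoothness of $f$ then gives
\[
f(w_{k+1})\le f(w_k)-(\gamma+\eta)\langle\nabla f(w_k),g_k\rangle+\frac{L(\gamma+\eta)^2}{2}\|g_k\|^2 .
\]
Set $\alpha:=\gamma+\eta$. We split $\langle\nabla f(w_k),g_k\rangle=\langle\nabla f(x_k),g_k\rangle+\langle\nabla f(w_k)-\nabla f(x_k),g_k\rangle$. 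After conditional expectation the first piece becomes $\|\nabla f(x_k)\|^2$.

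For the error piece we use Young's inequality and $L$-smoothness:
\[
\ebb_k\big[-\langle\nabla f(w_k)-\nabla f(x_k),\nabla f(x_k)\rangle\big]\le\frac12\|\nabla f(x_k)\|^2+\frac{L^2}{2}\|w_k-x_k\|^2 .
\]
Here the gap is explicit from \eqref{eq:wk-xk}: $\|w_k-x_k\|^2=\frac{\beta^2\eta^2}{(1-\beta)^2}\|m_{k-1}\|^2$. Combining this with \eqref{eq:g-bound} for $\ebb\|g_k\|^2$, we get
\[
\ebb f(w_{k+1})\le\ebb f(w_k)-\alpha\Big(\tfrac12-\tfrac{L\alpha}{2}\Big)\ebb\|\nabla f(x_k)\|^2+\frac{\alpha L^2\beta^2\eta^2}{2(1-\beta)^2}\ebb\|m_{k-1}\|^2+\frac{L\alpha^2}{2}\sigma^2 .
\]

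Next we invoke Lemma~\ref{lem:m-var} for $\ebb\|m_{k-1}\|^2$ and sum over $k=1,\dots,t$. We use $w_1=x_1$ and $f(w_{t+1})\ge f^*$. The geometric double sum satisfies $\sum_{k=1}^t\sum_{j=1}^{k-1}\beta^{k-1-j}a_j\le\frac{1}{1-\beta}\sum_{j=1}^t a_j$. The momentum-induced gradient term therefore contributes at most
\[
\frac{\alpha L^2\beta^2\eta^2}{(1-\beta)^2}\sum_{k}\ebb\|\nabla f(x_k)\|^2 .
\]
The noise part contributes $\frac{\alpha L^2\beta^2\eta^2}{(1-\beta)(1+\beta)}\,t\sigma^2$.

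The main obstacle is to check that the gradient coefficients still leave at least $\alpha/4$ in front of $\sum_k\ebb\|\nabla f(x_k)\|^2$ under $\alpha\le(1-\beta)/(3L)$. We need
\[
\tfrac12-\tfrac{L\alpha}{2}-\frac{L^2\beta^2\eta^2}{(1-\beta)^2}\ge\tfrac14 .
\]
Since $0<\eta\le\alpha$, the condition gives $L\eta\beta/(1-\beta)\le 1/3$, so the subtracted quantity is at most $\tfrac16+\tfrac19<\tfrac14$, and the requirement holds. Dividing by $\alpha t/4$ then gives the rate $\frac{4(f(x_1)-f^*)}{\alpha t}$.

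This constant is a factor $2$ worse than the stated $2/(\alpha t)$. To recover the stated constants we would refine the Young step: use the exact identity $-\langle a-b,b\rangle$ with $\frac12\|a\|^2$-type terms, i.e. $-\langle\nabla f(w_k),\nabla f(x_k)\rangle=-\tfrac12\|\nabla f(w_k)\|^2-\tfrac12\|\nabla f(x_k)\|^2+\tfrac12\|\nabla f(w_k)-\nabla f(x_k)\|^2$. We drop $-\tfrac12\|\nabla f(w_k)\|^2$ and bound the momentum term through Lemma~\ref{lem:m-var}. The leftover coefficient is then compared against $\tfrac12-\tfrac{L\alpha}{2}$, and the noise terms are tracked to produce $\big(\frac{\beta^2L\eta}{1+\beta}+L\alpha\big)\sigma^2$, using $L\eta\beta/(1-\beta)\le 1/3$ to absorb one factor of $\eta$ in the noise constant. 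The bookkeeping of these constants is where the care is needed.
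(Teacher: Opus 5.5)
Your architecture matches the paper's proof. You run descent on $w_k$ via Lemma~\ref{lem:w-sgd}, split off $\langle\nabla f(w_k)-\nabla f(x_k),\nabla f(x_k)\rangle$, control the drift through \eqref{eq:wk-xk} and Lemma~\ref{lem:m-var}, and sum the geometric double sum. However, as written the proposal does not prove the stated inequality, and your proposed repair cannot.

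With the unweighted Young step, the coefficient in front of $\ebb\|\nabla f(x_k)\|^2$ is $\alpha\bigl(\tfrac12-\tfrac{L\alpha}{2}-\tfrac{L^2\beta^2\eta^2}{(1-\beta)^2}\bigr)$. This is strictly less than $\alpha/2$ because $L\alpha>0$, so dividing by it can never produce $2(f(x_1)-f^*)/(\alpha t)$ or the noise constant $L\alpha$. The polarization identity does not change this. Dropping $-\tfrac12\|\nabla f(w_k)\|^2$ leaves $-\langle\nabla f(w_k),\nabla f(x_k)\rangle\le-\tfrac12\|\nabla f(x_k)\|^2+\tfrac12\|\nabla f(w_k)-\nabla f(x_k)\|^2$, which is literally the same inequality as your weight-one Young bound.

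Separately, your check of the $\alpha/4$ margin is arithmetically wrong, since $\tfrac16+\tfrac19=\tfrac{5}{18}>\tfrac14$. The margin does hold, but only if you use $L\alpha\le\frac{1-\beta}{3}$ in both terms. That gives $\frac{1-\beta}{6}+\frac{\beta^2}{9}\le\frac16$, and in fact a coefficient of at least $\alpha/3$. Carried through, your route yields $\frac{3(f(x_1)-f^*)}{\alpha t}+\bigl(\frac{\beta^2L\eta}{1+\beta}+\frac32L\alpha\bigr)\sigma^2$: the right order, but not the theorem.

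The missing idea is to weight Young's inequality inversely to the step size:
$\langle\nabla f(w_k)-\nabla f(x_k),\nabla f(x_k)\rangle\ge-\tfrac{\rho L^2}{2}\|w_k-x_k\|^2-\tfrac{1}{2\rho}\|\nabla f(x_k)\|^2$ with $\rho=\frac{1-\beta}{2L\alpha}$.
The loss on the gradient term is then only $\frac{1}{2\rho}=\frac{L\alpha}{1-\beta}\le\frac13$ instead of $\frac12$. The large weight on the drift is affordable because $\|w_k-x_k\|^2=\frac{\beta^2\eta^2}{(1-\beta)^2}\|m_{k-1}\|^2$ is second order in the step size; the drift term becomes $\frac{L\beta^2\eta^2}{4(1-\beta)}\ebb\|m_{k-1}\|^2$.

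After Lemma~\ref{lem:m-var} and your double-sum bound, the coefficient on $\sum_k\ebb\|\nabla f(x_k)\|^2$ is $\alpha\bigl(1-\frac{L\alpha}{1-\beta}-\frac{L\alpha}{2}-\frac{L\beta^2\eta^2}{2(1-\beta)\alpha}\bigr)$. Using $\eta\le\alpha$ and $\beta^2\le\beta$, this is at least $\alpha\bigl(1-\frac{3L\alpha}{2(1-\beta)}\bigr)\ge\frac{\alpha}{2}$. The accumulated noise is $\bigl(\frac{L\beta^2\eta^2}{2(1+\beta)}+\frac{L\alpha^2}{2}\bigr)t\sigma^2$. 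Dividing by $\alpha t/2$ and using $\eta^2/\alpha\le\eta$ gives exactly the stated bound. This is the paper's choice. Any admissible weight must also satisfy $\rho\le\frac{1-\beta}{2L\eta}$, since that is what keeps the momentum noise constant at $\frac{\beta^2L\eta}{1+\beta}$ with no extra factor.
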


\begin{proof}[Proof of Theorem \ref{thm:nc}]
We first obtain from Lemma \ref{lem:w-sgd} and the $L$-smoothness of $f$ that
\begin{equation}\label{eq:nc-1}
\begin{aligned}
f(w_{k+1}) & \leq f(w_k)+\langle\nabla f(w_k), w_{k+1}-w_k\rangle+\frac{L}{2} \|w_{k+1}-w_k\|^2 \\
& =f(w_k)-(\gamma+\eta)\langle\nabla f(w_k), g_k\rangle+\frac{L (\gamma+\eta)^2}{2} \|g_k\|^2.
\end{aligned}
\end{equation}
From $\ebb_k[g_k]=\nabla f(x_k)$ and \eqref{eq:wk-xk}, we have for any $\rho>0$ that 
\begin{equation}\label{eq:nc-2}
\begin{aligned}
\ebb[\langle\nabla f(w_k), g_k\rangle]= & \ebb[\langle\nabla f(w_k)-\nabla f(x_k), \nabla f(x_k)\rangle]+\ebb[\|\nabla f(x_k)\|^2] \\
\ge & -\frac{\rho}{2} \ebb[\|\nabla f(w_k)-\nabla f(x_k)\|^2]- \frac{1}{2 \rho} \ebb[\|\nabla f(x_k)\|^2]+ \ebb[\|\nabla f(x_k)\|^2]\\
\ge & -\frac{\rho L^2}{2}\ebb[\|w_k-x_k\|^2]- \Big(\frac{1}{2 \rho}-1\Big) \ebb[\|\nabla f(x_k)\|^2]\\
= & -\frac{\rho L^2\beta^2\eta^2}{2(1-\beta)^2}\ebb[\|m_{k-1}\|^2]- \Big(\frac{1}{2 \rho}-1\Big) \ebb[\|\nabla f(x_k)\|^2].
\end{aligned}
\end{equation}
Taking expectations on both sides of \eqref{eq:nc-1} and applying \eqref{eq:nc-2} and Assumption \ref{ass:s-bv} further gives
\begin{equation}\label{eq:nc-3}
\begin{aligned}
\ebb[f(w_{k+1})] \leq & \ebb[f(w_k)]+\frac{\rho L^2\beta^2\eta^2(\gamma+\eta)}{2(1-\beta)^2}  \ebb[\|m_{k-1}\|^2] \\
& +\Big(\frac{1}{2 \rho}-1\Big)(\gamma+\eta) \ebb[\|\nabla f(x_k)\|^2]+\frac{L (\gamma+\eta)^2}{2} \ebb[\|g_k\|^2]\\
\leq & \ebb[f(w_k)]+\frac{\rho L^2\beta^2\eta^2(\gamma+\eta)}{2(1-\beta)^2} \ebb[\|m_{k-1}\|^2]+\frac{L(\gamma+\eta)^2}{2} \sigma^2 \\
& +\Big(\frac{1}{2 \rho}-1+\frac{L(\gamma+\eta)}{2}\Big)(\gamma+\eta) \ebb[\|\nabla f(x_k)\|^2].
\end{aligned}
\end{equation}
From Lemma \ref{lem:m-var}, we can bound $\ebb[\|m_{k-1}\|^2]$ via
\begin{align*}
   \ebb[\|m_{k-1}\|^2]\leq& 2(1-\beta) \sum_{j=1}^{k-1} \beta^{k-1-j}\ebb[\|\nabla f(x_j)\|^2]+\frac{2(1-\beta)}{1+\beta} \sigma^2\\
   \leq& \frac{2(1-\beta)}{\beta}\sum_{j=1}^k \beta^{k-j}\ebb[\|\nabla f(x_j)\|^2]+\frac{2(1-\beta)}{1+\beta} \sigma^2,
\end{align*}
applying which on \eqref{eq:nc-3} leads to
\begin{multline}\label{eq:nc-4}
\ebb[f(w_{k+1})] \leq \ebb[f(w_k)]+\frac{\rho L^2 \beta \eta^2 (\gamma+\eta)}{1-\beta} \sum_{j=1}^k \beta^{k-j} \ebb[\|\nabla f(x_j)\|^2]+\Big(\frac{\rho L^2 \beta^2 \eta^2(\gamma+\eta)}{1-\beta^2}+\frac{L(\gamma+\eta)^2}{2}\Big) \sigma^2 \\
+\Big(\frac{1}{2 \rho}-1+\frac{L(\gamma+\eta)}{2}\Big)(\gamma+\eta) \ebb[\|\nabla f(x_k)\|^2].
\end{multline}
Summing up \eqref{eq:nc-4} over $k=1,\dots, t$ and noting that $w_1=x_1$ further gives
\begin{equation}\label{eq:nc-5}
\begin{aligned}
\ebb[f(w_{t+1})] \leq & f(x_1)+\frac{\rho L^2 \beta \eta^2 (\gamma+\eta)}{1-\beta}\sum_{k=1}^t\sum_{j=1}^k \beta^{k-j} \ebb[\|\nabla f(x_j)\|^2]+\Big(\frac{\rho L^2 \beta^2\eta^2 (\gamma+\eta)}{1-\beta^2}+\frac{L(\gamma+\eta)^2}{2}\Big) t \sigma^2 \\
& +\Big(\frac{1}{2 \rho}-1+\frac{L(\gamma+\eta)}{2}\Big)(\gamma+\eta) \sum_{k=1}^t \ebb[\|\nabla f(x_k)\|^2]\\
\leq & f(x_1)+\Big(\frac{\rho L^2 \beta^2\eta^2 (\gamma+\eta)}{1-\beta^2}+\frac{L(\gamma+\eta)^2}{2}\Big) t \sigma^2 \\
& +\Big(\frac{1}{2 \rho}-1+\frac{L(\gamma+\eta)}{2}+\frac{\rho L^2 \beta \eta^2}{(1-\beta)^2}\Big)(\gamma+\eta) \sum_{k=1}^t \ebb[\|\nabla f(x_k)\|^2],
\end{aligned}
\end{equation}
where the second inequality holds due to
\[
\sum_{k=1}^t\sum_{j=1}^k \beta^{k-j} \ebb[\|\nabla f(x_j)\|^2]=\sum_{j=1}^t\Big(\sum_{k=j}^t \beta^{k-j}\Big) \ebb[\|\nabla f(x_j)\|^2]\leq \frac{1}{1-\beta}\sum_{j=1}^t \ebb[\|\nabla f(x_j)\|^2].
\]
Let $\rho=\frac{1-\beta}{2 L(\gamma+\eta)}$, then it follows from \eqref{eq:nc-5} and $f(w_{t+1})\ge f^*$ that
\begin{equation}\label{eq:nc-6}
\begin{aligned}
& \Big(1-\frac{L(\gamma+\eta)}{1-\beta}-\frac{L(\gamma+\eta)}{2}-\frac{\beta L \eta^2}{2(1-\beta)(\gamma+\eta)}\Big)(\gamma+\eta) \sum_{k=1}^t \ebb[\|\nabla f(x_k)\|^2] \\
\leq& f(x_1)-f^*+\Big(\frac{\beta^2 L \eta^2}{2(1+\beta)}+\frac{L(\gamma+\eta)^2}{2}\Big) t \sigma^2.
\end{aligned}
\end{equation}
From $0\leq\eta\leq\gamma+\eta$ and the condition $\gamma+\eta \leq \frac{1-\beta}{3 L}$, we obtain that
\begin{equation*}
\begin{aligned}
\frac{L(\gamma+\eta)}{1-\beta}+\frac{L(\gamma+\eta)}{2}+\frac{\beta L \eta^2}{2(1-\beta)(\gamma+\eta)} \leq& L(\gamma+\eta)\Big(\frac{1}{1-\beta}+\frac{1}{2}+\frac{\beta}{2(1-\beta)}\Big) \\
= & L(\gamma+\eta) \frac{3}{2(1-\beta)} \leq \frac{1}{2},
\end{aligned}
\end{equation*}
which together with \eqref{eq:nc-6} implies that
\begin{equation}\label{eq:nc-7}
    \begin{aligned}
        \frac{\gamma+\eta}{2}\sum_{k=1}^t \ebb[\|\nabla f(x_k)\|^2]\leq& \Big(1-\frac{L(\gamma+\eta)}{1-\beta}-\frac{L(\gamma+\eta)}{2}-\frac{\beta L \eta^2}{2(1-\beta)(\gamma+\eta)}\Big)(\gamma+\eta) \sum_{k=1}^t \ebb[\|\nabla f(x_k)\|^2] \\
        \leq& f(x_1)-f^*+\Big(\frac{\beta^2 L \eta^2}{2(1+\beta)}+\frac{L(\gamma+\eta)^2}{2}\Big) t \sigma^2.
    \end{aligned}
\end{equation}
Dividing both sides of \eqref{eq:nc-7} by $(\gamma+\eta)t/2$ and noting $0\leq\eta\leq\gamma+\eta$ further gives
\[
    \frac{1}{t}\sum_{k=1}^t \ebb[\|\nabla f(x_k)\|^2]\leq \frac{2(f(x_1)-f^*)}{(\gamma+\eta)t}+\Big(\frac{\beta^2 L \eta}{1+\beta}+L(\gamma+\eta)\Big)\sigma^2,
\]
which completes the proof.
\end{proof}

While Theorem \ref{thm:nc} establishes convergence to stationary points, it does not provide guarantees on the quality of these points, since stationary points in nonconvex problems may correspond to local minima or saddle points. To obtain stronger convergence guarantees, we consider the setting in which the objective function $f$ satisfies the $\mu$-Polyak-\L{}ojasiewicz (PL) condition, that is, 
\begin{equation}\label{eq:pl-condition}
\| \nabla f(x) \|^2 \ge 2 \mu(f(x)-f^*),~\forall x\in\rbb^d.
\end{equation}
Note that any $\mu$-strongly convex function satisfies the $\mu$-PL condition in \eqref{eq:pl-condition}. However, the PL condition does not imply convexity in general. For example, the function $f(x) = x^2+ 3 \sin^2 x$ satisfies the PL condition while being nonconvex. Moreover, the PL condition has been shown to hold for a broad class of deep linear and shallow neural networks \citep{charles2018stability, hardt2016identity, li2017convergence}.

In the upcoming theorem, we establish an improved convergence result for \eqref{alg:g-sgdm} under the PL condition, measured in terms of the objective gaps along the auxiliary points $\{w_k\}_{k\ge 1}$. To exploit the PL condition effectively, we introduce a family of Lyapunov functions $\{\varphi_k\}_{k\ge 1}$ that combine the function value gap with accumulated gradient norms:
\begin{equation}\label{def:varphi}
\varphi_k=f(w_k)-f^*+C \sum_{j=1}^{k-1} \beta^{k-1-j}\|\nabla f(x_j)\|^2,
\end{equation}
where $C>0$ is a constant to be specified in the proof. This choice of Lyapunov function enables us to establish a contraction property, up to error terms arising from stochastic noise, which plays a crucial role in the convergence analysis.
\begin{theorem}\label{thm:pl}
Let Assumption \ref{ass:s-bv} hold and suppose that $f$ satisfies the $\mu$-PL condition \eqref{eq:pl-condition}. Let $\{x_k\}_{k\ge 1}$ be the sequence generated by \eqref{alg:g-sgdm} with constant parameters $\beta\in [0,1)$, $\gamma\ge 0$, and $\eta>0$. If we choose
\begin{equation}\label{eq:pl-lr}
    0<\gamma+\eta \leq \min\Big\{\frac{1-\beta}{2 L},~\frac{1-\beta}{8 \beta^2 L}\Big\},
\end{equation}
then for any $t\ge 1$, it holds that
\[
\ebb[f(w_{t+1})-f^*]\leq\Big(1-\frac{\mu(\gamma+\eta)}{18}\Big)^{t}(f(x_1)-f^*)+\Big(\frac{3 \beta^2 \eta}{1+\beta}+\gamma+\eta\Big) \frac{9L}{\mu} \sigma^2,
\]
where $w_t$ is defined in \eqref{def:wk}.
\end{theorem}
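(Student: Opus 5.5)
The plan is to reuse the one-step descent inequality from the proof of Theorem~\ref{thm:nc}, namely \eqref{eq:nc-4}, and then close it with the PL condition. The accumulated gradient term is absorbed by the Lyapunov function $\varphi_k$ in \eqref{def:varphi}.

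\textbf{Step 1: one-step inequality.} As in \eqref{eq:nc-1}--\eqref{eq:nc-4}, combine Lemma~\ref{lem:w-sgd}, $L$-smoothness, \eqref{eq:wk-xk} and Lemma~\ref{lem:m-var}. This time I keep the bound on $\ebb[\|m_{k-1}\|^2]$ in its sharper form $2(1-\beta)\sum_{j=1}^{k-1}\beta^{k-1-j}\ebb[\|\nabla f(x_j)\|^2]$ rather than crudely dividing by $\beta$. Writing $a:=\gamma+\eta$ and using $\eta\le a$, this gives
\[
\ebb[f(w_{k+1})]\le \ebb[f(w_k)] - c_1 a\,\ebb[\|\nabla f(x_k)\|^2] + c_2 a\, S_{k-1} + c_3\sigma^2 ,
\]
where $S_{k-1}:=\sum_{j=1}^{k-1}\beta^{k-1-j}\ebb[\|\nabla f(x_j)\|^2]$. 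The coefficients are $c_1=1-\tfrac{1}{2\rho}-\tfrac{La}{2}$, $c_2=\rho L^2\beta^2\eta^2/(1-\beta)$ and $c_3=\rho L^2\beta^2\eta^2 a/(1-\beta^2)+La^2/2$. I will take $\rho=1$, or a choice like $\rho=(1-\beta)/(2La)$ as before, so that $c_1\ge 1/4$ under \eqref{eq:pl-lr}.

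\textbf{Step 2: Lyapunov recursion.} Since $S_k=\beta S_{k-1}+\ebb[\|\nabla f(x_k)\|^2]$, we get
\[
\ebb[\varphi_{k+1}]\le \ebb[f(w_k)-f^*] + (C-c_1a)\,\ebb[\|\nabla f(x_k)\|^2] + (C\beta+c_2a)S_{k-1}+c_3\sigma^2 .
\]
I choose $C=c_1a/2$. The goal is then $C\beta+c_2a\le (1-\tfrac{\mu a}{18})C$; this is where $a\le (1-\beta)/(8\beta^2L)$ enters to make $c_2$ small relative to $C(1-\beta)$. This leaves a spare $-\tfrac{c_1a}{2}\ebb[\|\nabla f(x_k)\|^2]$ to produce the contraction on $f(w_k)-f^*$.

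\textbf{Step 3: contraction of $f(w_k)-f^*$ (main obstacle).} PL gives control at $x_k$, not at $w_k$. I would use
\[
\|\nabla f(w_k)\|^2\le 2\|\nabla f(x_k)\|^2+2L^2\|w_k-x_k\|^2,
\]
together with \eqref{eq:wk-xk} and Lemma~\ref{lem:m-var}. This yields
\[
\|\nabla f(x_k)\|^2\ge \mu(f(w_k)-f^*) - L^2\tfrac{\beta^2\eta^2}{(1-\beta)^2}\ebb[\|m_{k-1}\|^2].
\]
The leftover term is again bounded by a multiple of $S_{k-1}$ plus $\sigma^2$, and is absorbed into the slack from Step 2; this is why the constant degrades to $1/18$. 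So I spend part of the $-\tfrac{c_1 a}{2}\|\nabla f(x_k)\|^2$ term as $-\tfrac{a}{18}\cdot\mu(f(w_k)-f^*)$ and check that the extra $S_{k-1}$ coefficient still fits under both step size bounds. The bookkeeping of these constants is the delicate part.

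\textbf{Step 4: unrolling.} Steps 2 and 3 give $\ebb[\varphi_{k+1}]\le (1-\tfrac{\mu a}{18})\ebb[\varphi_k]+c_3'\sigma^2$, with $c_3'$ of order $La(\tfrac{\beta^2\eta}{1+\beta}+a)$. Iterating from $\varphi_1=f(x_1)-f^*$ and summing the geometric series (which divides by $\mu a/18$) gives the stated $\tfrac{9L}{\mu}(\tfrac{3\beta^2\eta}{1+\beta}+\gamma+\eta)\sigma^2$. Finally, $f(w_{t+1})-f^*\le\varphi_{t+1}$ completes the proof.
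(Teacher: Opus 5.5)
Your proof is correct and shares the paper's skeleton: the one-step bound \eqref{eq:nc-3}, the sharp form of Lemma~\ref{lem:m-var}, the Lyapunov function $\varphi_k$, and geometric unrolling. Two steps are done differently.

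First, the transfer of PL from one point to the other. The paper applies PL at $x_k$ and passes to $f(w_k)$ via the descent lemma plus Young's inequality, obtaining $\|\nabla f(x_k)\|^2\ge\mu(f(w_k)-f^*)-\frac{\mu(\mu+L)\beta^2\eta^2}{2(1-\beta)^2}\|m_{k-1}\|^2$. You apply PL at $w_k$ and return to $x_k$ via gradient Lipschitzness. This gives the same inequality with $L^2$ in place of $\mu(\mu+L)/2$: a one-line argument with a slightly worse constant, which is harmless here.

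Second, the choice of $C$. The paper picks $C$ by the exact formula \eqref{def:C}, so that the $S_{k-1}$ coefficient collapses to $C(1-\mu M)$. It then spends all of $-(M-C)\|\nabla f(x_k)\|^2$ on PL and must separately prove $M-C\ge(\gamma+\eta)/18$. You fix $C=c_1a/2$ (for $\rho=(1-\beta)/(2La)$, $c_1a$ is exactly the paper's $M$) and spend only $\frac{a}{18}$ of the gradient term. This reduces the closing step to an inequality check.

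That check does close, with room, for either of your choices of $\rho$. The total surplus on $S_{k-1}$ is $c_2a+\frac{aL^2\beta^2\eta^2}{9(1-\beta)}\le\frac{5}{9}L\beta^2\eta^2\le\frac{5}{72}(1-\beta)a$, using $La\le\frac{1-\beta}{2}$, $\eta\le a$ and $L\beta^2a\le\frac{1-\beta}{8}$. The available room is $C(1-\beta-\frac{\mu a}{18})\ge\frac{35}{288}(1-\beta)a$, using $c_1\ge\frac14$ and $\mu\le L$; the latter holds for any nonconstant $L$-smooth PL function, and the paper uses it too. The noise coefficient becomes $\frac{5L\beta^2\eta^2}{9(1+\beta)}+\frac{La^2}{2}$. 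After dividing by $\mu a/18$ this gives $\frac{10L\beta^2\eta}{\mu(1+\beta)}+\frac{9La}{\mu}$, which lies within (indeed slightly below) the stated $\frac{9L}{\mu}\bigl(\frac{3\beta^2\eta}{1+\beta}+a\bigr)$.

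So your route trades the paper's tailored algebra for simpler, more transparent bookkeeping. The paper's exact $C$ yields the contraction factor $1-\mu(M-C)$, which can be better, though it too is ultimately reported as $1-\mu a/18$.

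One small fix: your Step 3 display mixes a pointwise left-hand side with $\ebb[\|m_{k-1}\|^2]$. State it pointwise with $\|m_{k-1}\|^2$ and take expectations afterwards.
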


\begin{proof}[Proof of Theorem \ref{thm:pl}]
For notational simplicity, denote
\[
    M:=\gamma+\eta-\frac{L(\gamma+\eta)^2}{1-\beta}-\frac{L(\gamma+\eta)^2}{2}.
\]
Consider the Lyapunov functions $\{\varphi_k\}_{k\ge 1}$ defined by \eqref{def:varphi} with
\begin{equation}\label{def:C}
C:=\frac{\frac{\beta^2 L \eta^2}{2}+\frac{M \mu(\mu+L) \beta^2 \eta^2}{1-\beta}}{1-\beta-\mu M+\frac{\mu(\mu+L) \beta^2 \eta^2}{1-\beta}}.
\end{equation}
We first establish some auxiliary inequalities regarding the introduced constants $C$ and $M$. To start off, it follows from $0<\gamma+\eta \leq \frac{1-\beta}{2 L}$ and $\beta\in[0,1)$ that
\begin{equation}\label{eq:pl-M>}
    M=(\gamma+\eta)\Big(1-\frac{(3-\beta) L}{2(1-\beta)}(\gamma+\eta)\Big) \ge \Big(1-\frac{3-\beta}{4}\Big)(\gamma+\eta) \ge \frac{1}{4}(\gamma+\eta)>0.
\end{equation}
On the other hand, we note from $\gamma+\eta\leq\frac{1-\beta}{2 L}$ and $\mu\leq L$ that
\begin{equation}\label{eq:pl-M<}
    M \leq \gamma+\eta \leq \frac{1-\beta}{2 L}\leq\frac{1-\beta}{2 \mu}.
\end{equation}
Hence we derive $C>0$ by noting from \eqref{eq:pl-M<} that
\begin{equation}\label{eq:pl-M-1}
1-\beta-\mu M \ge \frac{1-\beta}{2}\ge 0.
\end{equation}
Next, we show that $M-C$ is positive and bounded from both sides. To this end, first note from $0\leq\eta\leq\gamma+\eta \leq \frac{1-\beta}{8 \beta^2 L}$, \eqref{eq:pl-M-1}, and \eqref{eq:pl-M>} that
\begin{equation}\label{eq:pl-M-C-1}
\begin{aligned}
M(1-\beta-\mu M)-\frac{\beta^2 L\eta^2}{2} \ge& \frac{1-\beta}{2} M-\frac{\beta^2 L \eta^2}{2} \ge \frac{1-\beta}{8}(\gamma+\eta)-\frac{\beta^2 L}{2}(\gamma+\eta)^2 \\
= & (\gamma+\eta)\Big(\frac{1-\beta}{8}-\frac{\beta^2 L}{2}(\gamma+\eta)\Big) \ge \frac{1-\beta}{16}(\gamma+\eta).
\end{aligned}
\end{equation}
On the other hand, we obtain from $\mu\leq L$ and \eqref{eq:pl-lr} that
\begin{equation}\label{eq:pl-M-C-2}
\begin{aligned}
1-\beta-\mu M+\frac{\mu(\mu+L) \beta^2 \eta^2}{1-\beta} & \leq 1-\beta+\frac{\mu(\mu+L) \beta^2(\gamma+\eta)^2}{1-\beta} \\
& \leq 1-\beta+\frac{2L^2 \beta^2}{1-\beta}\frac{1-\beta}{2 L}\frac{1-\beta}{8 \beta^2 L} \\
& = 1-\beta+\frac{1-\beta}{8} = \frac{9}{8}(1-\beta).
\end{aligned}
\end{equation}
Then it follows from $C>0$, \eqref{eq:pl-M<}, \eqref{eq:pl-M-C-1}, and \eqref{eq:pl-M-C-2} that
\begin{equation}\label{eq:pl-M-C-3}
\frac{1-\beta}{2 \mu} \ge M \ge M-C=\frac{M(1-\beta-\mu M)-\frac{\beta^2 L \eta^2}{2}}{1-\beta-\mu M+\frac{\mu(\mu+L) \beta^2 \eta^2}{1-\beta}} \ge \frac{\frac{1-\beta}{16}(\gamma+\eta)}{\frac{9}{8}(1-\beta)}=\frac{\gamma+\eta}{18}>0.
\end{equation}

Next, we provide an upper bound on $\|\nabla f(x_k)\|^2$ using the $\mu$-PL condition \eqref{eq:pl-condition}. To this end, we note from the $L$-smoothness of $f$ and \eqref{eq:wk-xk} that
\begin{equation}\label{eq:pl-2}
\begin{aligned}
f(w_k) & \leq f(x_k)+\langle\nabla f(x_k), w_k-x_k\rangle+\frac{L}{2}\|w_k-x_k\|^2 \\
& =f(x_k)+\Big\langle\nabla f(x_k),-\frac{\beta \eta}{1-\beta} m_{k-1}\Big\rangle+\frac{L \beta^2 \eta^2}{2(1-\beta)^2}\|m_{k-1}\|^2 \\
& \leq f(x_k)+\frac{1}{2\mu}\|\nabla f(x_k)\|^2+\frac{\mu\beta^2\eta^2}{2 (1-\beta)^2}\|m_{k-1}\|^2+\frac{L \beta^2 \eta^2}{2(1-\beta)^2}\|m_{k-1}\|^2 \\
& =f(x_k)+\frac{1}{2\mu}\|\nabla f(x_k)\|^2+\frac{(\mu+L)\beta^2 \eta^2}{2(1-\beta)^2}\|m_{k-1}\|^2.
\end{aligned}
\end{equation}
Then it follows from the $\mu$-PL condition \eqref{eq:pl-condition} and \eqref{eq:pl-2} that
\[
\frac{1}{2 \mu}\|\nabla f(x_k)\|^2 \ge f(x_k)-f^* \ge f(w_k)-f^*-\frac{1}{2\mu}\|\nabla f(x_k)\|^2-\frac{(\mu+L)\beta^2 \eta^2}{2(1-\beta)^2}\|m_{k-1}\|^2,
\]
which further implies that
\begin{equation}\label{eq:pl-nabla_k}
\|\nabla f(x_k)\|^2 \ge \mu(f(w_k)-f^*)-\frac{\mu (\mu+L)\beta^2 \eta^2}{2(1-\beta)^2}\|m_{k-1}\|^2.
\end{equation}

Now we are ready to establish a recursive inequality for the Lyapunov functions $\{\varphi_k\}_{k\ge 1}$. Plugging $\rho=\frac{1-\beta}{2 L(\gamma+\eta)}$ into \eqref{eq:nc-3} gives
\[
\ebb[f(w_{k+1})] \leq \ebb[f(w_k)]+\frac{\beta^2 L \eta^2}{4(1-\beta)} \ebb[\|m_{k-1}\|^2]+\frac{L(\gamma+\eta)^2}{2} \sigma^2-M \ebb[\|\nabla f(x_k)\|^2],
\]
which together with the definition of $\{\varphi_k\}_{k\ge 1}$ \eqref{def:varphi} implies that
\begin{equation}\label{eq:pl-3}
\begin{aligned}
\ebb[\varphi_{k+1}]:=&\ebb\Big[f(w_{k+1})-f^*+C \sum_{j=1}^k \beta^{k-j}\|\nabla f(x_j)\|^2\Big]\\
=&\ebb[f(w_{k+1})-f^*]+C \ebb[\|\nabla f(x_k)\|^2]+\beta C \sum_{j=1}^{k-1} \beta^{k-1-j} \ebb[\|\nabla f(x_j)\|^2] \\
\leq & \ebb[f(w_k)-f^*]+\frac{\beta^2 L \eta^2}{4(1-\beta)} \ebb[\|m_{k-1}\|^2]+\frac{L(\gamma+\eta)^2}{2} \sigma^2-(M-C) \ebb[\|\nabla f(x_k)\|^2]\\
&+\beta C \sum_{j=1}^{k-1} \beta^{k-1-j} \ebb[\|\nabla f(x_j)\|^2].
\end{aligned}
\end{equation}
From $M-C>0$, applying \eqref{eq:pl-nabla_k} and Lemma \ref{lem:m-var} on \eqref{eq:pl-3}, we further obtain
\begin{equation}\label{eq:pl-4}
\begin{aligned}
\ebb[\varphi_{k+1}] \leq& [1-\mu(M-C)] \ebb[f(w_k)-f^*]+\Big(\frac{\beta^2 L \eta^2}{4(1-\beta)}+\frac{(M-C) \mu(\mu+L) \beta^2 \eta^2}{2(1-\beta)^2}\Big) \ebb[\|m_{k-1}\|^2] \\
&+\frac{L(\gamma+\eta)^2}{2} \sigma^2+\beta C \sum_{j=1}^{k-1} \beta^{k-1-j} \ebb[\|\nabla f(x_j)\|^2]\\
\leq & [1-\mu(M-C)] \ebb[f(w_k)-f^*]+\Big(\frac{\beta^2 L \eta^2}{2(1+\beta)}+\frac{(M-C) \mu(\mu+L) \beta^2 \eta^2}{1-\beta^2}+\frac{L(\gamma+ \eta)^2}{2}\Big) \sigma^2 \\
& +\Big(\beta C+\frac{\beta^2 L \eta^2}{2}+\frac{(M-C) \mu(\mu+L) \beta^2 \eta^2}{1-\beta}\Big) \sum_{j=1}^{k-1} \beta^{k-1-j} \ebb[\|\nabla f(x_j)\|^2].
\end{aligned}
\end{equation}
From the definition of $C$ \eqref{def:C}, it is straightforward to verify that
\begin{equation}\label{eq:pl-5}
\beta C+\frac{\beta^2 L \eta^2}{2}+\frac{(M-C) \mu(\mu+L) \beta^2 \eta^2}{1-\beta} = C(1-\mu M)\leq C[1-\mu(M-C)]
\end{equation}
Plugging \eqref{eq:pl-5} into \eqref{eq:pl-4} and noting \eqref{eq:pl-M-C-3} and $\mu\leq L$, we derive
\begin{equation}\label{eq:pl-6}
\begin{aligned}
\ebb[\varphi_{k+1}] & \leq[1-\mu(M-C)] \ebb[\varphi_k]+\Big(\frac{\beta^2 L \eta^2}{2(1+\beta)}+\frac{(M-C) \mu(\mu+L) \beta^2 \eta^2}{1-\beta^2}+\frac{L(\gamma+\eta)^2}{2}\Big) \sigma^2 \\
& \leq\Big(1-\frac{\mu(\gamma+\eta)}{18}\Big) \ebb[\varphi_k]+\Big(\frac{\beta^2 L \eta^2}{2(1+\beta)}+\frac{(\mu+L) \beta^2 \eta^2}{2(1+\beta)}+\frac{L(\gamma+\eta)^2}{2}\Big) \sigma^2 \\
& \leq\Big(1-\frac{\mu(\gamma+\eta)}{18}\Big) \ebb[\varphi_k]+\Big(\frac{3 \beta^2 L \eta^2}{2(1+\beta)}+\frac{L(\gamma+\eta)^2}{2}\Big) \sigma^2.
\end{aligned}
\end{equation}
Applying \eqref{eq:pl-6} recursively and noting that $\varphi_1=f(x_1)-f^*$ further leads to
\begin{equation*}
\begin{aligned}
&\ebb[f(w_{k+1})-f^*]\leq\ebb[\varphi_{k+1}]\\
\leq & \Big(1-\frac{\mu(\gamma+\eta)}{18}\Big)^k(f(x_1)-f^*)+\Big(\frac{3 \beta^2 L \eta^2}{2(1+\beta)}+\frac{L(\gamma+\eta)^2}{2}\Big) \sigma^2 \sum_{j=0}^{k-1}\Big(1-\frac{\mu(\gamma+\eta)}{18}\Big)^j \\
\leq & \Big(1-\frac{\mu(\gamma+\eta)}{18}\Big)^k(f(x_1)-f^*)+\Big(\frac{3 \beta^2 L \eta^2}{2(1+\beta)}+\frac{L(\gamma+\eta)^2}{2}\Big) \sigma^2 \cdot \frac{18}{\mu(\gamma+\eta)} \\
\leq & \Big(1-\frac{\mu(\gamma+\eta)}{18}\Big)^k(f(x_1)-f^*)+\Big(\frac{3 \beta^2 \eta}{1+\beta}+\gamma+\eta\Big) \frac{9L}{\mu} \sigma^2,
\end{aligned}
\end{equation*}
which completes the proof.
\end{proof}

\begin{remark}
    We compare Theorems \ref{thm:nc} and \ref{thm:pl} with the results established in \citep{liu2020improved}, which studied the convergence behavior of \eqref{alg:sgd-hb} with constant parameters $\beta\in[0,1)$ and $\eta>0$ in both nonconvex and strongly convex settings.  A key technical ingredient in their analysis is the construction of Lyapunov functions $\{\mathcal{L}_k\}_{k\ge 1}$ of the form
    \[
    \mathcal{L}_k=f(w_k)-f^*+\sum_{j=1}^{k-1} c_j\|x_{k+1-j}-x_{k-j}\|^2,
    \]
    where $w_k$ is defined by \eqref{def:wk} with $\gamma=0$, and $\{c_k\}_{k\ge 1}$ are carefully chosen positive constants.
    By appropriately specifying these constants, \citep{liu2020improved} derived a recursive inequality for the Lyapunov sequence $\{\mathcal{L}_k\}_{k\ge1}$. Specifically, for an $L$-smooth nonconvex objective $f$, they showed that if $\eta=O(\frac{1-\beta}{L})$, then \eqref{alg:sgd-hb} achieves the stationary convergence
    \begin{equation}\label{eq:liu-1}
        \frac{1}{t}\sum_{k=1}^t \ebb[\|\nabla f(x_k)\|^2]=O\left(\frac{f\left(x_1\right)-f^*}{t \eta}+L \eta \sigma^2\right).
    \end{equation}
    Moreover, under the additional assumption that $f$ is $\mu$-strongly convex, the same Lyapunov framework yields
    \begin{equation}\label{eq:liu-2}
        \ebb[f(w_k)-f^*]=O\Big((1-\eta \mu)^k+\frac{L}{\mu} \eta \sigma^2\Big).
    \end{equation}    
    In comparison, Theorems \ref{thm:nc} and \ref{thm:pl} improve upon the results of \citep{liu2020improved} in several respects. First, for general nonconvex $L$-smooth objectives, Theorem \ref{thm:nc} establishes convergence of \eqref{alg:g-sgdm} without introducing any Lyapunov function, thereby substantially simplifying the analysis. By setting $\gamma=0$ in Theorem \ref{thm:nc}, we recover the same order of convergence rate as in \eqref{eq:liu-1} for any step size $\eta\leq\frac{1-\beta}{3L}$. Furthermore, Theorem \ref{thm:pl} yields a convergence result comparable to \eqref{eq:liu-2} when $\gamma=0$. Importantly, this result does not require $\mu$-strong convexity; instead, it relies on the weaker PL condition, which is compatible with nonconvex settings. In addition, our analysis employs a different class of Lyapunov functions $\{\varphi_k\}_{k\ge1}$ defined in \eqref{def:varphi}. Finally, our results apply to the more general method \eqref{alg:g-sgdm}, thereby extending the convergence guarantees beyond \eqref{alg:sgd-hb}.
\end{remark}

\section{Experiments}
\label{sec:experiments}
In this section, we conduct numerical experiments to validate our theoretical results for \eqref{alg:g-sgdm} on both convex and nonconvex problems.

\subsection{Logistic Regression}
We begin by evaluating the performance of \eqref{alg:g-sgdm} on a well-studied convex problem: the binary logistic regression problem
\begin{equation}\label{prob:logistic}
\min_{x \in \rbb^d} f(x) = \frac{1}{n} \sum_{i=1}^n \log\big(1 + \exp(-b_i \langle x, a_i \rangle)\big).
\end{equation}
Here, $(a_i, b_i)\in \rbb^d\times \{0,1\}$ denotes the $i$-th sample, where $a_i\in\rbb^d$ is the feature vector and $b_i\in\{0,1\}$ is the corresponding label.
To conduct the experiments, we use the MNIST dataset~\citep{lecun2002gradient}, which consists of $n=60{,}000$ training samples with $d=784$ features. The data are preprocessed by normalizing each feature vector to have unit $\ell_2$-norm. In addition, we convert the dataset into a binary classification problem by assigning the first half of the digit classes to label $0$ and the second half to label $1$. The following methods are tested in this part:

\textbf{SGD:} Stochastic gradient descent with a constant step size $\gamma>0$, which corresponds to \eqref{alg:g-sgdm} with parameters $\beta_k\equiv 0$, $\gamma_k\equiv\gamma$, and $\eta_k\equiv 0$.

\textbf{HB-const:} The heavy-ball method with constant momentum parameter $\beta\in[0,1)$ and step size $\eta>0$. This method corresponds to \eqref{alg:g-sgdm} with parameters $\beta_k\equiv \beta$, $\gamma_k\equiv 0$, and $\eta_k\equiv \eta$.

\textbf{NAG-const:} Nesterov’s momentum method with constant parameters $\beta\in[0,1)$ and $\gamma>0$. This method corresponds to \eqref{alg:g-sgdm} with parameters $\beta_k\equiv \beta$, $\gamma_k\equiv \gamma$, and $\eta_k\equiv \beta\gamma/(1-\beta)$.

\textbf{NAG-varying:} The classical Nesterov accelerated gradient method with time-varying momentum parameters $\beta_k=\frac{k-1}{k+2}$ and constant step size $\gamma>0$. This method corresponds to \eqref{alg:sgd-nesterov} with parameters $\beta_k=\frac{k-1}{k+2}$, $\gamma_k\equiv \gamma$, and $\eta_k\equiv \eta>0$.

\textbf{SGDM-varying:} The generalized SGDM method \eqref{alg:g-sgdm} with time-varying parameters chosen according to Corollary~\ref{cor:cvx-vary}.

We first compare the above five methods in both full-batch and mini-batch settings:

\textbf{Full-batch (deterministic) setting.} Each method is run for $T=2000$ iterations using the full dataset (i.e., batch size is $n$). For HB-const, the step size is set to $\eta=1/L$. To ensure a fair comparison, we use the same step size $\gamma=1/L$ for SGD (which becomes GD), NAG-const, NAG-varying, and SGDM-varying.
Note that the step-size choices for GD, HB-const, and NAG-const satisfy the conditions of Theorem~\ref{thm:cvx-const-deter}.

\textbf{Mini-batch (stochastic) setting.} Each method is run with a batch size of $128$ for $200$ epochs (approximately $93{,}750$ total iterations). We set $\eta=0.1$ for HB-const and fix the step size $\gamma=0.1$ for SGD, NAG-const, NAG-varying, and SGDM-varying.

In both settings, we fix the momentum parameter to $\beta=0.9$ and use the same initial point, drawn randomly from a normal distribution, for all methods. The optimal solution of \eqref{prob:logistic} is computed using the Python package \textit{scipy}. We evaluate the performance of a point $x\in\rbb^d$ using the optimality gap $f(x)-f^*$. Figure~\ref{fig:log_full_mini} summarizes the results for all five methods in both settings. We observe that: 
\begin{itemize}
\item In the full-batch setting, GD, HB-const, and NAG-const generate iterates with monotonically decreasing optimality gaps, consistent with the guarantees of Theorem~\ref{thm:cvx-const-deter}. Moreover, SGDM-varying exhibits improved performance comparable to NAG-varying, validating the accelerated $O(1/t^2)$ convergence rate established in Corollary~\ref{cor:cvx-vary}(i).
\item In the mini-batch setting, SGD, HB-const, and NAG-const again produce decreasing optimality gaps, which aligns with Theorem~\ref{thm:cvx-const}. In contrast, SGDM-varying preserves accelerated convergence despite stochastic gradients, whereas NAG-varying fails to converge.
\end{itemize}

\begin{figure}[t]
    \centering
    \begin{subfigure}[b]{0.47\textwidth}
        \includegraphics[width=\textwidth]{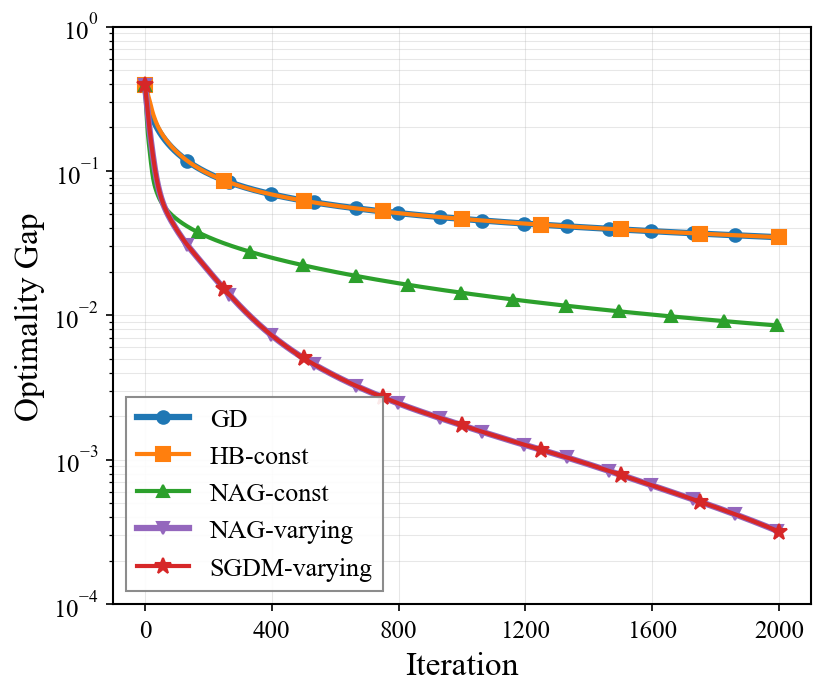}
        \caption{Full-batch}
        \label{fig:log_full}
    \end{subfigure}
    \hfill
    \begin{subfigure}[b]{0.48\textwidth}
        \includegraphics[width=\textwidth]{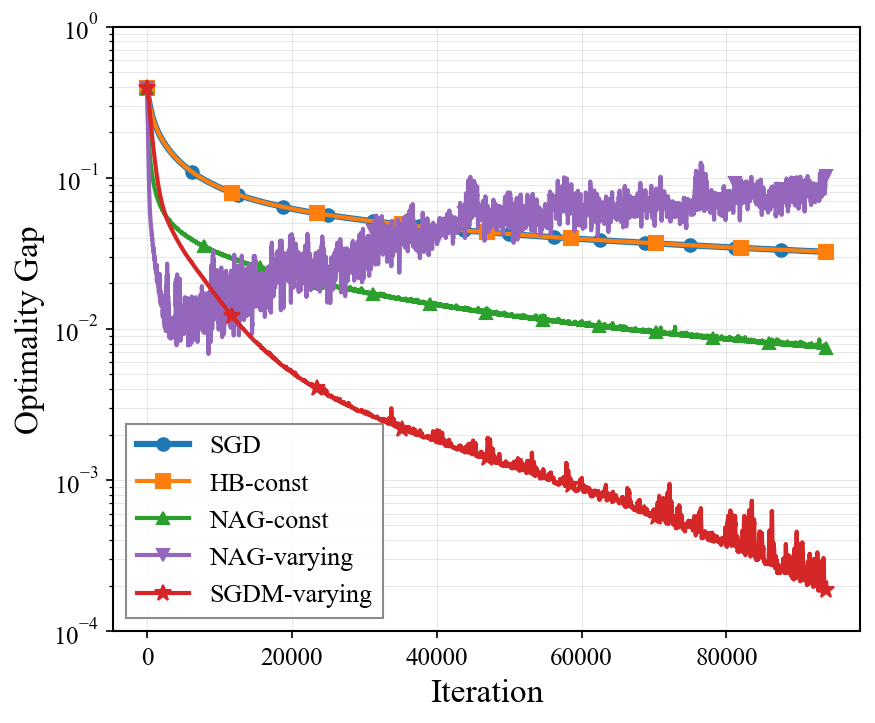}
        \caption{Mini-batch}
        \label{fig:log_mini}
    \end{subfigure}
    \caption{Evolution of optimality gap on the binary logistic regression problem in both full-batch (deterministic) and mini-batch (stochastic) settings.}
    \label{fig:log_full_mini}
\end{figure}

To further demonstrate the superior performance of SGDM-varying achieved through adaptive parameter adjustment, we conduct a detailed comparison between NAG-const and SGDM-varying. Specifically, we test both methods with different step sizes $\gamma \in \{0.01, 0.05, 0.2\}$. For a fair comparison, we fix the momentum parameter as $\beta = 0.9$ for NAG-const and $\beta_1 = 0.9$ for SGDM-varying. Both methods are run for $100$ epochs with a batch size of $128$.

Figure~\ref{fig:log_nag_comparison} shows the results for the different step sizes. We observe that both methods exhibit consistent improvement in the final training loss as $\gamma$ increases from $0.01$ to $0.2$. Furthermore, for the same $\gamma$, NAG-const decreases the optimality gap faster in the early stage of training, whereas SGDM-varying outperforms NAG-const in the later iterations. 

\begin{figure}
    \centering
    \includegraphics[width=0.98\linewidth]{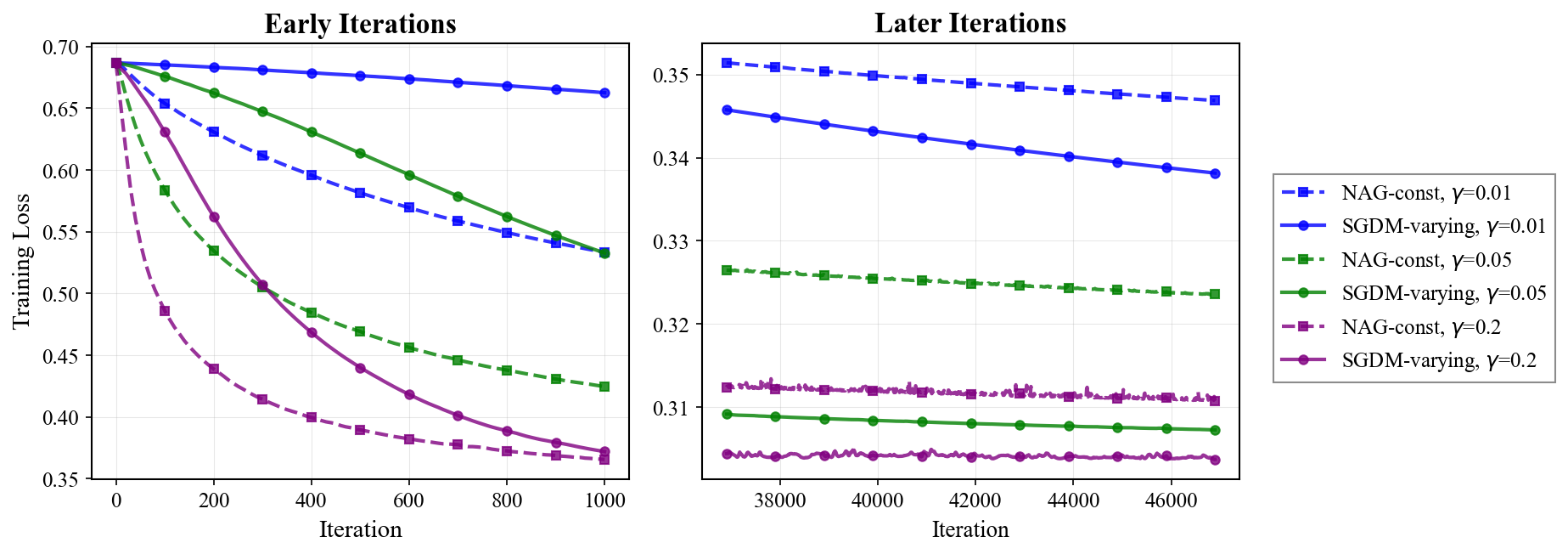}
    \caption{Comparison between NAG-const and SGDM-varying on the binary logistic regression problem with different step size $\gamma>0$. For better interpretability, we display the training loss curves in the early stage (first 1000 iterations) and the late stage (last 10000 iterations).}
    \label{fig:log_nag_comparison}
\end{figure}

\subsection{Image Classification}
In this part, we investigate the performance of \eqref{alg:g-sgdm} on nonconvex problems. Specifically, we train the ResNet-18 \citep{he2016deep} on the CIFAR-10 dataset \citep{krizhevsky2009learning}, which contains $60,000$ color images of size $32\times 32$ across 10 classes, with $50,000$ training samples and $10,000$ test samples. We adopt the standard cross-entropy loss for multi-class classification. All experiments are conducted in PyTorch on an NVIDIA A100 GPU.

Recall that in Theorem \ref{thm:nc}, we establish convergence for \eqref{alg:g-sgdm} with constant parameters $\{\beta, \gamma, \eta\}$. To validate this result, we test \eqref{alg:g-sgdm} under different combinations of step sizes $(\gamma, \eta)$ while fixing the momentum parameter $\beta = 0.9$. We consider several representative schemes, including special cases corresponding to classical optimization methods:
\begin{itemize}
    \item SGD: $\gamma=0.01$ and $\eta = 0$;
    \item Heavy-ball method: $\gamma=0$ and $\eta=0.01$;
    \item Nesterov's momentum method: $\gamma=0.01$ and $\eta = \frac{\beta \gamma}{1-\beta}=0.09$;
    \item Other generalized SGDM variants: $\gamma=0.01$, $\eta \in \{0.01, 0.05, 0.5\}$.
\end{itemize}
For each pair $(\gamma, \eta)$, we use a batch size of $128$ and train ResNet-18 for $100$ epochs on the training dataset. During training, we store the iterates along the optimization trajectories and evaluate the corresponding prediction accuracy on the test dataset.

Figure~\ref{fig:resnet18} shows the training loss and test accuracy curves for all considered methods. We observe that among them, plain SGD exhibits the poorest performance, showing the slowest decrease in training loss as well as the lowest prediction accuracy. While the heavy-ball method displays a training loss trajectory comparable to that of SGD, it achieves faster improvement in prediction accuracy, indicating superior generalization behavior. Nesterov’s momentum method further improves upon the heavy-ball method in both optimization efficiency and predictive performance. Finally, the step size combination $(\gamma,\eta) = (0.01, 0.5)$ consistently outperforms all other configurations, achieving the fastest convergence and the highest test accuracy.
These results demonstrate that \eqref{alg:g-sgdm} not only encompasses classical methods such as SGD, heavy-ball, and Nesterov acceleration, but also allows more flexible choices of $(\gamma, \eta)$ that can surpass these standard schemes. Properly balancing the step sizes $\gamma$ and $\eta$ can lead to both faster convergence and improved generalization in practice.
\begin{figure}
    \centering
    \includegraphics[width=0.9\linewidth]{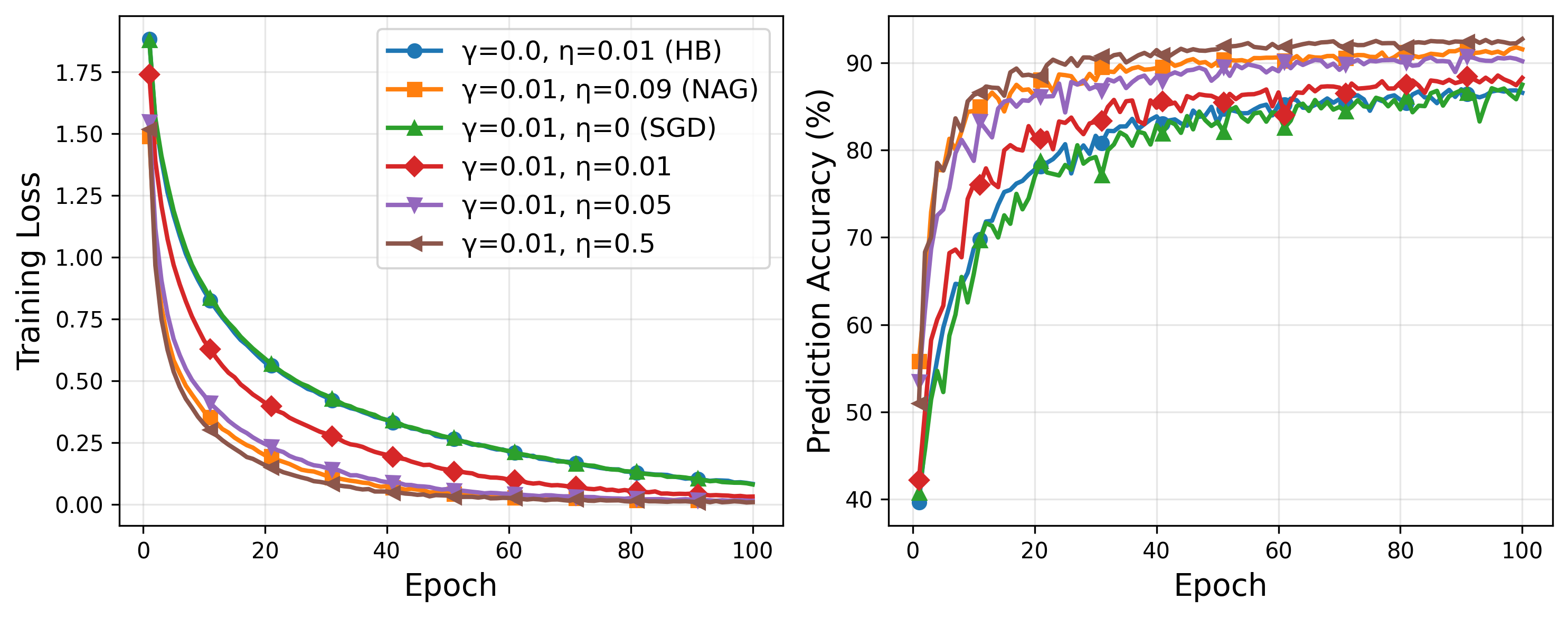}
    \caption{Training ResNet-18 on CIFAR-10 using \eqref{alg:g-sgdm} with different combinations of constant step sizes $(\gamma, \eta)$ with a fixed momentum parameter $\beta = 0.9$.}
    \label{fig:resnet18}
\end{figure}

\section{Conclusion}
\label{sec:conclusion}
In this paper, we propose a generalized stochastic gradient descent with momentum framework for smooth optimization, encompassing a broad class of momentum-based stochastic methods as special cases. Under standard smoothness and bounded variance assumptions, we provided a unified convergence analysis for both convex and nonconvex problems, offering new insights into the interaction between momentum and stochastic gradients.

For convex objectives, we established ergodic convergence results with flexible constant parameters and showed that, with appropriately designed time-varying parameters, the framework achieves an accelerated convergence rate of $O(1/t^2)$, akin to Nesterov's accelerated gradient method in deterministic settings, with an additional stochastic noise term. For nonconvex problems, we proved sublinear convergence to stationary points under constant parameters and demonstrated linear convergence to a neighborhood of the global optimum under the Polyak–\L{}ojasiewicz condition.

Importantly, our results rely only on mild smoothness and bounded variance assumptions and accommodate flexible parameter choices. This generality not only recovers existing convergence guarantees but also provides new ones for many popular momentum methods, including SGD with Polyak's or Nesterov's momentum, stochastic unified momentum (SUM), quasi-hyperbolic momentum (QHM), and momentum-added stochastic solver (MASS). Numerical experiments on logistic regression and ResNet-18 training validate our theoretical findings and highlight the practical advantages of the generalized framework.

While this work provides a solid theoretical foundation and empirical validation, several promising research directions remain open. Future work could extend this framework to adaptive step-size schemes, distributed or federated optimization, and nonsmooth or constrained problems. We believe that the generalized perspective and analytical tools developed in this work will serve as a foundation for further theoretical and algorithmic investigations in stochastic momentum-based optimization.

\section*{Acknowledgments}

This work was supported by the Wallenberg AI, Autonomous Systems and Software Program (WASP) funded by the Knut and Alice Wallenberg Foundation. The computations were enabled by the Berzelius resource provided by the Knut and Alice Wallenberg Foundation at the National Supercomputer Centre.

\setlength{\bibsep}{0.111cm}
\bibliographystyle{abbrvnat}
\bibliography{learning}
\end{document}